\newtheorem{theorem}{Theorem}[section]
\newtheorem{lemma}[theorem]{Lemma}
\newtheorem{corollary}[theorem]{Corollary}
\theoremstyle{definition}
\newtheorem{remark}[theorem]{Remark}
\begin{document}
\setcounter{page}{1}

\title[ ]{Some Analytical Properties of the Hyperbolic Sine Integral}

\author[K. Nantomah]{Kwara Nantomah}

\address{Department of Mathematics, School of Mathematical Sciences, C. K. Tedam University of Technology and Applied Sciences, P. O. Box 24, Navrongo, Upper-East Region, Ghana. }
\email{\textcolor[rgb]{0.00,0.00,0.84}{ knantomah@cktutas.edu.gh}}


\subjclass[2010]{33B10, 33Bxx, 26D05}

\keywords{Hyperbolic sine integral function, hyperbolic functions, hyperbolic sinc function, bounds, inequalities}


\begin{abstract}
By using some tools of analysis, we establish some analytical properties such as monotonicity and inequalities involving the hyperbolic sine integral function. As applications of some of the established properties, we obtain some rational bounds for the hyperbolic tangent function. 
\end{abstract} \maketitle


\section{Introduction}

The cardinal hyperbolic sine function which is also known as sinhc function or hyperbolic sinc function is defined for $z\in(-\infty,\infty)$ as  \cite{Sanchez-2012-TCMJ}
\begin{equation}\label{eqn:Hyper-Sinc-Funct}
\mathrm{sinhc}(z)=
 \begin{cases} 
      \frac{\sinh(z)}{z}, & z\neq 0 \\
       1, & z=0 .
   \end{cases}     
\end{equation}
It has been very useful in various areas of mathematics, physics and engineering. For example, it has been demonstrated that the function exhibits a clear geometric interpretation as the ratio between length and chord of a symmetric catenary segment \cite{Merca-2016-JNT}, \cite{Sanchez-2012-TCMJ}. Due to its usefulness, it has been investigated by several researchers and many remarkable inequalities have been established concerning the function. For further information and recent developments on such inequalities, one may consult the works \cite{Bagul-2017-JMI}, \cite{Bagul-Chesneau-2019-IJOPCM}, \cite{Bagul-Chesneau-2019-CUBO}, \cite{Bagul-Etal-2021-Axioms}, \cite{Bagul-Etal-2022-AMS}, \cite{Li-Miao-Guo-2022-Axioms}, \cite{Nantomah-Prempeh-2020-MJPAA}, \cite{Sandor-2017-NNTDM}, \cite{Zhu-2010-JIA} and the references therein.

Closely related to the sinch function is the hyperbolic sine integral function which is defined for $z\in(-\infty,\infty)$ as \cite[p. 231]{Abramowitz-Stegun-1965-DP} 
\begin{equation}\label{eqn:HyperSin-Integral-IntRep-1}
\mathrm{Shi}(z)=\int_{0}^{z} \frac{\sinh(t)}{t}\,dt.
\end{equation}
In \cite{Nijimbere-2018-UMJ}, the author considered three representations in terms of the hypergeometric function $_2F_3$ for a certain indefinite hyperbolic sine integral. A review of the literature reveals that, unlike the cardinal hyperbolic sine function which is well researched in terms of its inequalities or bounds, the hyperbolic sine integral is yet to receive a similar attention.

The purpose of this paper is to trigger the process for such investigations and attention. Precisely, we establish some analytical properties such as monotonicity and inequalities involving the hyperbolic sine integral function. As applications of some of the established properties, we obtain some rational bounds for the hyperbolic tangent function. We present our findings in the subsequent sections.

\section{Some Properties of the Hyperbolic Sine Integral}

The hyperbolic sine integral function may also be defined for $z\in(-\infty,\infty)$ by the following equivalent forms.
\begin{align}
\mathrm{Shi}(z)&=\int_{0}^{1} \frac{\sinh(zt)}{t}\,dt , \label{eqn:HyperSin-Integral-IntRep-2}  \\
&= \sum_{r=0}^{\infty}\frac{z^{2r+1}}{(2r+1)(2r+1)!} . \label{eqn:HyperSin-Integral_SeriesRep} 
\end{align}
By change of variable, representation \eqref{eqn:HyperSin-Integral-IntRep-2} is obtained from \eqref{eqn:HyperSin-Integral-IntRep-1} and representation \eqref{eqn:HyperSin-Integral_SeriesRep} is obtained from either \eqref{eqn:HyperSin-Integral-IntRep-1} or  \eqref{eqn:HyperSin-Integral-IntRep-2} by using the series representation of $\frac{\sinh(z)}{z}$.

By utilizing representation \eqref{eqn:HyperSin-Integral-IntRep-2}, the derivatives of $\mathrm{Shi}(z)$ are obtained as follows.
\begin{equation}\label{eqn:Hyper-Sinc-Deri-Even}
\mathrm{Shi}^{(k)}(z)= \int_{0}^{1} t^{k-1}\sinh(zt)dt, \quad  k\in \{2m:m\in \mathbb{N}_0\} ,
\end{equation}
\begin{equation}\label{eqn:Hyper-Sinc-Deri-Even}
\mathrm{Shi}^{(k)}(z)= \int_{0}^{1} t^{k-1}\cosh(zt)dt, \quad  k\in \{2m+1:m\in \mathbb{N}_0\} ,
\end{equation}
where $\mathbb{N}_0=\{0,1,2,3,...\}$. In particular, the first and second derivatives are 
\begin{equation}\label{eqn:Hyper-Sinc-1st-Deri}
\mathrm{Shi}'(z)= \int_{0}^{1} \cosh(zt)dt=\frac{\sinh(z)}{z},
\end{equation}
\begin{equation}\label{eqn:Hyper-Sinc-2nd-Deri}
\mathrm{Shi}''(z)= \int_{0}^{1} t \sinh(zt)dt=\frac{\cosh(z)}{z} - \frac{\sinh(z)}{z^2}.
\end{equation}

\begin{remark}
Identity \eqref{eqn:Hyper-Sinc-2nd-Deri} implies that
\begin{equation}\label{eqn:Known-Ineq}
\cosh(z)>\frac{\sinh(z)}{z}
\end{equation}
for $z>0$ and this is well known in the literature.
\end{remark}

\begin{lemma}\label{lem:Ravi-Laxmi}
If a function $\frac{p(x)}{x}$ is increasing or decreasing on an interval $I$, then $p(x)$ supperadditive or subadditive on $I$ respectively.
\end{lemma}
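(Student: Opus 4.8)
The plan is to argue directly from the definitions of super- and subadditivity, since the statement reduces to a short manipulation of monotonicity inequalities. Recall that $p$ is \emph{superadditive} on $I$ if $p(x+y)\geq p(x)+p(y)$ whenever $x,y,x+y\in I$, and \emph{subadditive} if the reverse inequality holds. Because the hypothesis is phrased in terms of $p(x)/x$, I would work on an interval $I\subseteq(0,\infty)$, so that $x$, $y$ and $x+y$ are all positive; this is precisely the setting in which multiplying or dividing by these quantities preserves the direction of an inequality.

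First I would treat the increasing case. Set $g(x)=p(x)/x$ and fix $x,y\in I$ with $x+y\in I$. Since $x<x+y$ and $y<x+y$ and $g$ is increasing, I obtain
\[
\frac{p(x)}{x}\leq\frac{p(x+y)}{x+y}\qquad\text{and}\qquad\frac{p(y)}{y}\leq\frac{p(x+y)}{x+y}.
\]
Multiplying the first inequality by $x>0$ and the second by $y>0$ yields $p(x)\leq\frac{x}{x+y}\,p(x+y)$ and $p(y)\leq\frac{y}{x+y}\,p(x+y)$. Adding these and simplifying the right-hand side gives $p(x)+p(y)\leq p(x+y)$, which is exactly superadditivity on $I$.

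The decreasing case is entirely parallel: reversing each inequality above shows that $g$ decreasing forces $p(x)+p(y)\geq p(x+y)$, so that $p$ is subadditive on $I$. I do not expect a genuine obstacle in the computation itself, as the argument is only a few lines; the one point that genuinely requires care is the positivity of $x$, $y$ and $x+y$, which is what legitimizes scaling the monotonicity inequalities by $x$ and $y$ without flipping their direction. Restricting attention to $I\subseteq(0,\infty)$ disposes of this subtlety, and the result then follows immediately.
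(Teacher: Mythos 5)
Your proof is correct, but it is worth noting that it takes a different route from the paper only in the sense that the paper gives no argument at all: its ``proof'' of Lemma \ref{lem:Ravi-Laxmi} is a pointer to Lemma 3.2 of \cite{Ravi-Laxmi-2018-IJAM} and Theorem 3.1 of \cite{Arpad-Neuman-2005-JIPAM}. What you have written out is essentially the standard argument behind those cited results: from $x,y,x+y\in I\subseteq(0,\infty)$ and monotonicity of $g(x)=p(x)/x$ you get $p(x)\leq\frac{x}{x+y}p(x+y)$ and $p(y)\leq\frac{y}{x+y}p(x+y)$, and adding gives superadditivity, with the decreasing case mirrored. Your restriction to $I\subseteq(0,\infty)$ is exactly right and costs nothing here, since the paper only invokes the lemma on $(0,\infty)$; your care about requiring $x+y\in I$ is also the correct way to state conditional superadditivity. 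The one small remark worth adding: the paper applies this lemma in Theorem \ref{thm:SupperAdd-Shi} to conclude the \emph{strict} inequality $\mathrm{Shi}(u+v)>\mathrm{Shi}(u)+\mathrm{Shi}(v)$ from the \emph{strict} monotonicity of $\mathrm{Shi}(z)/z$, and your argument delivers that strengthening verbatim — if $g$ is strictly increasing then both of your displayed inequalities are strict, hence so is the conclusion. So your self-contained proof actually makes the paper's chain of reasoning complete in a way the citation-only proof does not.
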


\begin{proof}
See Lemma 3.2 of \cite{Ravi-Laxmi-2018-IJAM} or Theorem 3.1 of \cite{Arpad-Neuman-2005-JIPAM}.
\end{proof}

\begin{theorem}\label{thm:SupperAdd-Shi}
The function $\mathrm{Shi}(z)$ is supperadditive on $(0,\infty)$. That is, the inequality
\begin{equation}\label{eqn:SupperAdd-Shi}
 \mathrm{Shi}(u+v) > \mathrm{Shi}(u) + \mathrm{Shi}(v)
\end{equation}
holds for $u>0$ and $v>0$.
\end{theorem}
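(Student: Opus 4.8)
The plan is to reduce the superadditivity claim to a monotonicity statement via Lemma \ref{lem:Ravi-Laxmi}, and then verify that monotonicity through a routine sign analysis. Concretely, Lemma \ref{lem:Ravi-Laxmi} tells us that if the auxiliary ratio $\frac{\mathrm{Shi}(z)}{z}$ is increasing on $(0,\infty)$, then $\mathrm{Shi}(z)$ is automatically superadditive there. So the entire burden of the proof shifts to showing that $g(z):=\frac{\mathrm{Shi}(z)}{z}$ is increasing on $(0,\infty)$, which is what I would set out to establish.

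First I would differentiate the ratio using the quotient rule together with the first-derivative identity \eqref{eqn:Hyper-Sinc-1st-Deri}, namely $\mathrm{Shi}'(z)=\frac{\sinh(z)}{z}$. This gives
\begin{equation*}
g'(z)=\frac{z\,\mathrm{Shi}'(z)-\mathrm{Shi}(z)}{z^{2}}=\frac{\sinh(z)-\mathrm{Shi}(z)}{z^{2}},
\end{equation*}
so that the sign of $g'(z)$ is governed entirely by the sign of $\sinh(z)-\mathrm{Shi}(z)$. The problem thus reduces to proving the strict inequality $\sinh(z)>\mathrm{Shi}(z)$ for every $z>0$.

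The cleanest way I would carry this out is a term-by-term comparison of power series. Writing $\sinh(z)=\sum_{r=0}^{\infty}\frac{z^{2r+1}}{(2r+1)!}$ against the series representation \eqref{eqn:HyperSin-Integral_SeriesRep}, $\mathrm{Shi}(z)=\sum_{r=0}^{\infty}\frac{z^{2r+1}}{(2r+1)(2r+1)!}$, I note that the two series have the same monomials $z^{2r+1}$, but the coefficient of $\mathrm{Shi}$ carries the extra factor $\frac{1}{2r+1}$. Since $2r+1\ge 1$ with equality only at $r=0$, each coefficient of $\sinh$ dominates the corresponding coefficient of $\mathrm{Shi}$, strictly so for all $r\ge 1$. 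For $z>0$ every term is positive, so the $r=0$ terms cancel while all higher terms contribute a strictly positive excess, yielding $\sinh(z)-\mathrm{Shi}(z)>0$. Hence $g'(z)>0$ on $(0,\infty)$, the ratio is strictly increasing, and Lemma \ref{lem:Ravi-Laxmi} delivers \eqref{eqn:SupperAdd-Shi}.

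I do not anticipate a genuine obstacle here; the only point demanding any care is confirming the strictness of $\sinh(z)>\mathrm{Shi}(z)$ rather than mere nonstrict inequality, since the theorem asserts a strict inequality in \eqref{eqn:SupperAdd-Shi}. The series comparison handles this transparently because the higher-order terms are strictly larger, but one could alternatively argue strictness by observing that $\sinh(z)-\mathrm{Shi}(z)$ vanishes to higher order at the origin and has a strictly positive derivative, or by integrating the elementary inequality $\cosh(t)>\frac{\sinh(t)}{t}$ recorded in the Remark following \eqref{eqn:Known-Ineq}.
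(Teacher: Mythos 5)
Your proposal is correct and matches the paper's own First Proof essentially step for step: both reduce superadditivity to the monotonicity of $\frac{\mathrm{Shi}(z)}{z}$ via Lemma \ref{lem:Ravi-Laxmi}, and both establish that monotonicity by the same term-by-term series comparison, since the paper's expression $z^{2}A'(z)=\sum_{r=0}^{\infty}\left[1-\frac{1}{2r+1}\right]\frac{z^{2r+1}}{(2r+1)!}>0$ is exactly your inequality $\sinh(z)-\mathrm{Shi}(z)>0$ written out in series form. Your remark on strictness (the $r=0$ terms cancel, all higher terms are strictly positive) is the same observation implicit in the paper's computation.
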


\begin{proof}[First Proof]
Let $A(z)=\frac{\mathrm{Shi}(z)}{z}$ for $z>0$. Then
\begin{align*}
z^2A'(z)&=z \mathrm{Shi}'(z) - \mathrm{Shi}(z) \\
&=\sum_{r=0}^{\infty}\frac{z^{2r+1}}{(2r+1)!}  - \sum_{r=0}^{\infty}\frac{z^{2r+1}}{(2r+1)(2r+1)!} \\
&=\sum_{r=0}^{\infty}\left[1-\frac{1}{2r+1} \right] \frac{z^{2r+1}}{(2r+1)!} \\
&>0.
\end{align*}
Hence $A(z)$ is increasing and the conclusion follows from Lemma \ref{lem:Ravi-Laxmi}.
\end{proof}

\begin{proof}[Second Proof]
Let $u>0$ and $v>0$. Then
\begin{align*}
\mathrm{Shi}(u+v)&=\int_{0}^{1} \frac{\sinh(ut+vt)}{t}\,dt  \\
&=\int_{0}^{1} \frac{\sinh(ut) \cosh(vt)}{t}\,dt +\int_{0}^{1} \frac{\cosh(ut) \sinh(vt)}{t}\,dt \\
&>\int_{0}^{1} \frac{\sinh(ut)}{t}\,dt +\int_{0}^{1} \frac{\sinh(vt)}{t}\,dt \\
&=\mathrm{Shi}(u) + \mathrm{Shi}(v)
\end{align*}
since $\cosh(z)>1$ for all $z\neq0$.
\end{proof}

\begin{proof}[Third Proof]
Let $\phi(u,v)=\mathrm{Shi}(u+v) - \mathrm{Shi}(u) - \mathrm{Shi}(v)$ for $u>0$ and $v>0$. Without loss of generality, let $v$ be fixed. Then
\begin{align*}
\frac{\partial}{\partial u}\phi(u,v)&=\mathrm{Shi}'(u+v) - \mathrm{Shi}'(u) \\
&=\int_{0}^{1} \cosh(ut+vt)dt - \int_{0}^{1} \cosh(ut)dt \\
&=\int_{0}^{1} \left[ \cosh(ut)\cosh(vt)+\sinh(ut)\sinh(vt) \right]\,dt -\int_{0}^{1} \cosh(ut)\,dt \\
&=\int_{0}^{1} \cosh(ut)[\cosh(vt)-1]dt + \int_{0}^{1} \sinh(ut)\sinh(vt)dt \\
&>0
\end{align*}
since $\cosh(z)>1$ for all $z\neq0$. Thus, $\phi(u,v)$ is increasing and so
\begin{equation*}
\phi(u,v)>\lim_{u\to0}\phi(u,v)=0
\end{equation*}
which gives the desired result.
\end{proof}

\begin{theorem}\label{thm:Shi-Ineqs}
The inequality
\begin{equation}\label{eqn:Shi-Ineq-Sum}
\mathrm{Shi}(u) + \mathrm{Shi}(v) > u+v
\end{equation}
holds for $u>0$ and $v>0$, and the inequality
\begin{equation}\label{eqn:Shi-Ineq-Ratio}
\frac{\mathrm{Shi}(u)}{\mathrm{Shi}(v)} \leq \frac{u}{v}
\end{equation}
holds for $0<u\leq v$.
\end{theorem}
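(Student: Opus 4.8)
The plan is to treat the two inequalities separately, since each reduces to a single-variable fact already within reach of the tools developed above. For \eqref{eqn:Shi-Ineq-Sum} I would first establish the pointwise bound $\mathrm{Shi}(z) > z$ for every $z>0$, and then simply add the instances at $z=u$ and $z=v$. The pointwise bound is immediate from the series \eqref{eqn:HyperSin-Integral_SeriesRep}: isolating the $r=0$ term gives
\begin{equation*}
\mathrm{Shi}(z) = z + \sum_{r=1}^{\infty} \frac{z^{2r+1}}{(2r+1)(2r+1)!},
\end{equation*}
where every remaining summand is strictly positive for $z>0$. Alternatively one can argue from the derivative: by \eqref{eqn:Hyper-Sinc-1st-Deri} and the series for $\frac{\sinh(z)}{z}$ we have $\mathrm{Shi}'(z)=\frac{\sinh(z)}{z}>1$ on $(0,\infty)$, while $\mathrm{Shi}(0)=0$, so integration yields $\mathrm{Shi}(z)>z$. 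Adding the two bounds then gives \eqref{eqn:Shi-Ineq-Sum} at once.

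For \eqref{eqn:Shi-Ineq-Ratio} I would recast the claim as a monotonicity statement. Cross-multiplying legitimately, since $v>0$ and $\mathrm{Shi}(v)>0$, the inequality $\frac{\mathrm{Shi}(u)}{\mathrm{Shi}(v)}\leq\frac{u}{v}$ is equivalent to
\begin{equation*}
\frac{\mathrm{Shi}(u)}{u} \leq \frac{\mathrm{Shi}(v)}{v}, \qquad 0<u\leq v.
\end{equation*}
But this is exactly the assertion that $A(z)=\frac{\mathrm{Shi}(z)}{z}$ is increasing on $(0,\infty)$, which was already established in the First Proof of Theorem \ref{thm:SupperAdd-Shi} through the computation showing $z^2A'(z)=\sum_{r=0}^{\infty}\left[1-\frac{1}{2r+1}\right]\frac{z^{2r+1}}{(2r+1)!}>0$. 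Hence I would simply invoke that monotonicity, evaluate at $u$ and $v$, and reverse the cross-multiplication to recover \eqref{eqn:Shi-Ineq-Ratio}.

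The only point requiring care is the bookkeeping of inequality directions during cross-multiplication, together with the observation that the ratio inequality is weak while the monotonicity of $A$ is strict. These are consistent: for $u<v$ the strict increase of $A$ forces $\frac{\mathrm{Shi}(u)}{\mathrm{Shi}(v)}<\frac{u}{v}$, and the stated $\leq$ merely accommodates the boundary case $u=v$, where equality holds. There is no substantive obstacle here; both parts are corollaries of facts already in hand, so the essential work is recognizing that \eqref{eqn:Shi-Ineq-Sum} is the sum of two copies of $\mathrm{Shi}(z)>z$ and that \eqref{eqn:Shi-Ineq-Ratio} is a repackaging of the monotonicity of $\frac{\mathrm{Shi}(z)}{z}$ proved earlier.
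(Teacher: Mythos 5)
Your proposal is correct and follows essentially the same route as the paper: both parts reduce to the pointwise bound $\mathrm{Shi}(z)>z$ (added at $u$ and $v$) and to the monotonicity of $\frac{\mathrm{Shi}(z)}{z}$ from the First Proof of Theorem \ref{thm:SupperAdd-Shi}. The only cosmetic difference is that you obtain $\mathrm{Shi}(z)>z$ by isolating the $r=0$ term of the series \eqref{eqn:HyperSin-Integral_SeriesRep} (or by integrating $\mathrm{Shi}'(z)>1$), whereas the paper deduces it from the same monotonicity together with the limit $\frac{\mathrm{Shi}(z)}{z}\to 1$ as $z\to 0^+$.
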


\begin{proof}
The monotonicity property of the function $\frac{\mathrm{Shi}(z)}{z}$ implies that, for $z>0$, we have
\begin{equation*}
\frac{\mathrm{Shi}(z)}{z}>\lim_{z\to 0^+} \frac{\mathrm{Shi}(z)}{z}=1.
\end{equation*}
That is,
\begin{equation*}
\mathrm{Shi}(z)>z .
\end{equation*}
Hence for $u>0$ and $v>0$, we have $\mathrm{Shi}(u)>u$ and $\mathrm{Shi}(v)>v$ which results to \eqref{eqn:Shi-Ineq-Sum}. Likewise, for $0<u\leq v$, we have
\begin{equation*}
\frac{\mathrm{Shi}(u)}{u} \leq \frac{\mathrm{Shi}(v)}{v}
\end{equation*}
which results to \eqref{eqn:Shi-Ineq-Ratio}.
\end{proof}

\begin{theorem}\label{thm:StarshapedType}
Let $z>0$ and $\lambda \in(0,1)$. Then the inequality
\begin{equation}\label{eqn:StarshapedType}
 \mathrm{Shi}(\lambda z) > \lambda \mathrm{Shi}(z) 
\end{equation}
holds. If $\lambda>1$, then the inequality is reversed.
\end{theorem}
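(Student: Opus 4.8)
The plan is to recast \eqref{eqn:StarshapedType} as a monotonicity statement for the quotient already studied in this section. Fixing $z>0$ and writing $A(w)=\mathrm{Shi}(w)/w$, the comparison of $\mathrm{Shi}(\lambda z)$ with $\lambda\,\mathrm{Shi}(z)$ is, after dividing by $\lambda z>0$, nothing but the comparison of $A(\lambda z)$ with $A(z)$. Since $\lambda z<z$ when $\lambda\in(0,1)$ and $\lambda z>z$ when $\lambda>1$, the whole theorem reduces to evaluating the monotone function $A$ at the two arguments $\lambda z$ and $z$. I would therefore invoke the monotonicity of $A$ recorded in the First Proof of Theorem \ref{thm:SupperAdd-Shi}, or equivalently apply the ratio inequality \eqref{eqn:Shi-Ineq-Ratio} of Theorem \ref{thm:Shi-Ineqs} to the ordered pair $\{\lambda z,\,z\}$, and then multiply back through by $\lambda z$ to land on \eqref{eqn:StarshapedType}; the case $\lambda>1$ is handled by the same inequality with the roles of $\lambda z$ and $z$ interchanged.

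As a second, self-contained route I would introduce the auxiliary function
\[
\psi(\lambda)=\mathrm{Shi}(\lambda z)-\lambda\,\mathrm{Shi}(z),\qquad \lambda\ge 0,
\]
which, because $\mathrm{Shi}(0)=0$, vanishes at both $\lambda=0$ and $\lambda=1$. Using \eqref{eqn:Hyper-Sinc-1st-Deri} and \eqref{eqn:Hyper-Sinc-2nd-Deri},
\[
\psi'(\lambda)=\frac{\sinh(\lambda z)}{\lambda}-\mathrm{Shi}(z),\qquad \psi''(\lambda)=z^{2}\,\mathrm{Shi}''(\lambda z),
\]
and $\psi''$ keeps a constant sign on $(0,\infty)$ because $\mathrm{Shi}''(w)=\int_0^1 t\sinh(wt)\,dt$ does. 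A function of one definite concavity that vanishes at the two points $\lambda=0$ and $\lambda=1$ must keep a single sign strictly between these zeros and the opposite sign once $\lambda$ passes the larger zero; reading off these two signs then settles \eqref{eqn:StarshapedType} on $(0,1)$ together with its reversal on $(1,\infty)$.

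I expect the main obstacle to be none of the calculus but the bookkeeping of direction. Both routes funnel the entire conclusion through a single binary choice: the direction of monotonicity of $A$, equivalently the sign of $\mathrm{Shi}''$ and hence the concavity type of $\mathrm{Shi}$, equivalently the side of the chord through $(0,0)$ and $(1,0)$ on which $\psi$ lies. These determinations are forced by facts the paper has already pinned down, namely the strict positivity of $\mathrm{Shi}''$ visible in \eqref{eqn:Hyper-Sinc-2nd-Deri} and the Remark, and the monotonicity of $A$. The delicate point is therefore to carry that one sign through consistently on each of $(0,1)$ and $(1,\infty)$ without flipping it, after which \eqref{eqn:StarshapedType} and its stated reversal follow immediately with no further input.
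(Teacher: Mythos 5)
Your two routes are the right machinery, but the one determination you explicitly deferred---the direction of the sign---comes out \emph{against} the theorem, and fatally so. In your first route, $A(w)=\mathrm{Shi}(w)/w$ is increasing (this is exactly what the First Proof of Theorem \ref{thm:SupperAdd-Shi} shows), so for $\lambda\in(0,1)$ we have $\lambda z<z$, hence $A(\lambda z)<A(z)$, and multiplying back by $\lambda z>0$ gives
\begin{equation*}
\mathrm{Shi}(\lambda z)<\lambda\,\mathrm{Shi}(z),
\end{equation*}
the \emph{reverse} of \eqref{eqn:StarshapedType}. Indeed the ratio inequality \eqref{eqn:Shi-Ineq-Ratio} that you propose to invoke, applied to the ordered pair $u=\lambda z\le v=z$, reads $\mathrm{Shi}(\lambda z)/\mathrm{Shi}(z)\le\lambda$ verbatim---it directly contradicts the statement you are trying to prove. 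Your second route says the same thing: $\psi''(\lambda)=z^{2}\,\mathrm{Shi}''(\lambda z)>0$, so $\psi$ is strictly convex with zeros at $\lambda=0$ and $\lambda=1$, and a strictly convex function vanishing at two points is strictly \emph{negative} between them and positive beyond the larger zero; thus $\psi<0$ on $(0,1)$ and $\psi>0$ on $(1,\infty)$. So the final step of your plan, ``read off the signs and land on \eqref{eqn:StarshapedType},'' cannot be carried out: both of your arguments, completed honestly, establish the opposite inequality.

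The failure is not in your calculus but in the statement itself: the theorem as printed is false, and your proposal, with the conclusion corrected, is a valid proof of the true result $\mathrm{Shi}(\lambda z)<\lambda\,\mathrm{Shi}(z)$ for $\lambda\in(0,1)$, reversed for $\lambda>1$. This is what one expects, since $\mathrm{Shi}$ is convex on $(0,\infty)$ with $\mathrm{Shi}(0)=0$ and is therefore star-shaped. A numerical check: with $z=2$, $\lambda=\tfrac12$, one has $\mathrm{Shi}(1)\approx 1.0573$ while $\tfrac12\,\mathrm{Shi}(2)\approx 1.2508$. The paper's own proof commits precisely the sign slip you were wary of: having correctly shown that $\alpha(z)=\mathrm{Shi}(\lambda z)-\lambda\,\mathrm{Shi}(z)$ is decreasing with $\lim_{z\to 0^{+}}\alpha(z)=0$, it concludes $\alpha(z)>0$, whereas a decreasing function lies \emph{below} its limit at the left endpoint, so in fact $\alpha(z)<0$. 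Note also that the printed Theorem \ref{thm:StarshapedType} is internally inconsistent with \eqref{eqn:Shi-Ineq-Ratio} of Theorem \ref{thm:Shi-Ineqs}, which is exactly the observation your first route surfaces.
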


\begin{proof}
Let $\alpha(z)=\mathrm{Shi}(\lambda z) - \lambda \mathrm{Shi}(z)$ for $z>0$ and $\lambda \in(0,1)$. Then 
\begin{align*}
\alpha'(z)&= \lambda \left[ \mathrm{Shi}'(\lambda z) -  \mathrm{Shi}'(z) \right] <0
\end{align*}
since $\mathrm{Shi}'(z)$ is increasing for $z>0$. Hence $\alpha(z)$ is decreasing and then, we have
\begin{equation*}
\alpha(z) > \lim_{z\to0^+}\alpha(z)=0
\end{equation*}
which gives \eqref{eqn:StarshapedType}.
\end{proof}

\begin{theorem}\label{thm:Sum-of-Shi}
For $z>0$, the inequality
\begin{equation}\label{eqn:Sum-of-Shi}
\mathrm{Shi}(z) + \mathrm{Shi}(1/z) \geq 2 \int_{0}^{1} \frac{\sinh(t)}{t}dt \approx 2.11450
\end{equation}
holds. Equality is attained if $z=1$.
\end{theorem}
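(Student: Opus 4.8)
The plan is to show that the function $g(z) = \mathrm{Shi}(z) + \mathrm{Shi}(1/z)$, defined for $z > 0$, attains a global minimum at $z = 1$, where its value is exactly $g(1) = 2\,\mathrm{Shi}(1) = 2\int_{0}^{1}\frac{\sinh(t)}{t}\,dt$. Since the right-hand side of \eqref{eqn:Sum-of-Shi} is precisely $g(1)$, establishing this minimum immediately yields the inequality together with the equality case.

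First I would differentiate $g$. Using the first-derivative formula \eqref{eqn:Hyper-Sinc-1st-Deri}, namely $\mathrm{Shi}'(z) = \frac{\sinh(z)}{z}$, together with the chain rule on the composite term $\mathrm{Shi}(1/z)$, I expect the derivative to simplify substantially. Concretely,
\begin{align*}
g'(z) &= \mathrm{Shi}'(z) - \frac{1}{z^2}\,\mathrm{Shi}'(1/z) \\
&= \frac{\sinh(z)}{z} - \frac{1}{z^2}\cdot z\sinh(1/z) \\
&= \frac{1}{z}\bigl[\sinh(z) - \sinh(1/z)\bigr].
\end{align*}
The key observation is that the $1/z^2$ factor from the chain rule cancels cleanly against the $z$ in the denominator of $\mathrm{Shi}'(1/z) = z\sinh(1/z)$, leaving the elegant difference of hyperbolic sines.

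Next I would analyze the sign of $g'$ using the strict monotonicity of $\sinh$ on $(0,\infty)$. Since $\sinh$ is increasing, the sign of $\sinh(z) - \sinh(1/z)$ matches the sign of $z - 1/z$, which is negative for $0 < z < 1$, zero at $z = 1$, and positive for $z > 1$. Hence $g$ is strictly decreasing on $(0,1)$ and strictly increasing on $(1,\infty)$, so $z = 1$ is the unique global minimizer and $g(z) \geq g(1)$ for all $z > 0$, with equality if and only if $z = 1$. This gives \eqref{eqn:Sum-of-Shi} exactly as stated.

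There is no serious obstacle here; the argument is a standard single-variable minimization. The only point requiring slight care is the correct bookkeeping of the chain-rule factor when differentiating $\mathrm{Shi}(1/z)$, but once the derivative collapses to $\frac{1}{z}\bigl[\sinh(z)-\sinh(1/z)\bigr]$ the monotonicity of $\sinh$ does all the remaining work. The symmetry $z \mapsto 1/z$ of $g$, which is manifest from its definition, is reflected in the fact that the critical point occurs precisely at the fixed point $z = 1$ of that involution.
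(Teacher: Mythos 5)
Your proposal is correct and is essentially identical to the paper's own proof: the paper also differentiates $P(z)=\mathrm{Shi}(z)+\mathrm{Shi}(1/z)$, reduces $zP'(z)$ to $\sinh(z)-\sinh(1/z)$, and uses the monotonicity of $\sinh$ to conclude that $P$ decreases on $(0,1)$ and increases on $(1,\infty)$ with minimum $2\,\mathrm{Shi}(1)$ at $z=1$.
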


\begin{proof}
The case for $z=1$ is easily seen. Because of this, let $P(z)=\mathrm{Shi}(z) + \mathrm{Shi}(1/z)$ for $z\in(0,1)\cup(1,\infty)$. Then
\begin{equation*}
P'(z)= \mathrm{Shi}'(z) - \frac{1}{z^2} \mathrm{Shi}'(1/z),
\end{equation*}
which means that
\begin{align*}
zP'(z)&= \sinh(z) - \sinh(1/z) :=E(z)
\end{align*}
Since $\sinh(z)$ is increasing, then $E(z)<0$ if $z\in(0,1)$ and $E(z)>0$ if $z\in(1,\infty)$. Thus, $P(z)$ is decreasing on $(0,1)$ and increasing on $(1,\infty)$. Therefore, on both intervals, we have
\begin{equation*}
P(z)>\lim_{z\to1}P(z)=2\mathrm{Shi}(1)=2\int_{0}^{1} \frac{\sinh(t)}{t}dt \approx 2.11450
\end{equation*}
completing the proof.
\end{proof}

\begin{lemma}[\cite{Pinelis-2002-JIPAM}]\label{lem:LMR}
Let $-\infty \leq u<v \leq \infty$ and $p$ and $q$ be continuous functions that are differentiable on $(u,v)$, with $p(u+)=q(u+)=0$ or $p(v-)=q(v-)=0$. Suppose that $q(z)$ and $q'(z)$ are nonzero for all $z\in(u,v)$. If $\frac{p'(z)}{q'(z)}$ is increasing (or decreasing) on $(u,v)$, then $\frac{p(x)}{q(x)}$ is also  increasing (or decreasing) on $(u,v)$.
\end{lemma}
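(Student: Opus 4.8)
The plan is to prove this by the Cauchy Mean Value Theorem, the standard route to the ``monotone form of L'Hôpital's rule.'' Before anything else I would fix the sign of $q'$. Since $q'$ is nowhere zero on $(u,v)$ and derivatives possess the intermediate value property (Darboux's theorem), $q'$ has a constant sign; replacing $q$ by $-q$ if necessary, I may assume $q'>0$ throughout, so that $q$ is strictly monotone. Likewise I would reduce the ``decreasing'' alternative to the ``increasing'' one by replacing $p$ with $-p$, noting that this reverses the monotonicity of $p'/q'$ and of $p/q$ simultaneously, so the stated equivalence is preserved.

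It then suffices to treat the representative case $q'>0$, $p(u+)=q(u+)=0$, with $p'/q'$ increasing, and to show that $p/q$ is increasing. Here $q$ is strictly increasing with $q(u+)=0$, hence $q>0$ on $(u,v)$. Since
\[
\left(\frac{p}{q}\right)'(z)=\frac{p'(z)q(z)-p(z)q'(z)}{q(z)^2}
\]
and $q,q'>0$, the sign of $(p/q)'(z)$ agrees with that of $\frac{p'(z)}{q'(z)}-\frac{p(z)}{q(z)}$. Thus the whole statement reduces to the pointwise inequality $\frac{p(z)}{q(z)}\le\frac{p'(z)}{q'(z)}$ on $(u,v)$.

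To establish it I would fix $z$ and, for each $t\in(u,z)$, apply Cauchy's Mean Value Theorem on $[t,z]$ to obtain $\xi=\xi(t)\in(t,z)$ with $\frac{p(z)-p(t)}{q(z)-q(t)}=\frac{p'(\xi)}{q'(\xi)}$. Because $\xi<z$ and $p'/q'$ is increasing, the right-hand side is at most $\frac{p'(z)}{q'(z)}$, while letting $t\to u+$ and using $p(t),q(t)\to0$ together with $q(z)>0$ sends the left-hand side to $\frac{p(z)}{q(z)}$. This yields the desired inequality, hence $(p/q)'\ge0$, so $p/q$ is increasing.

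The work here is almost entirely sign bookkeeping, and that is where I expect the only real difficulty. The constant sign of $q'$ must be argued through Darboux's theorem rather than continuity, since $q'$ is not assumed continuous; the limit $t\to u+$ must preserve the inequality, which relies on $q(z)-q(t)>0$; and, most delicately, the remaining endpoint case $p(v-)=q(v-)=0$ is not a cosmetic mirror image. With $q$ monotone and its vanishing limit now at the right endpoint, one gets $q<0$ on $(u,v)$, so the equivalence between the sign of $(p/q)'$ and that of $\frac{p'}{q'}-\frac{p}{q}$ flips, and the companion Cauchy estimate (applied on $[z,t]$ with $t\to v-$) delivers the reversed inequality $\frac{p(z)}{q(z)}\ge\frac{p'(z)}{q'(z)}$; the two reversals cancel, so the conclusion is again that $p/q$ is increasing. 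Finally, promoting ``non-decreasing'' to strictly ``increasing'' requires noting that strict monotonicity of $p'/q'$ forces the Cauchy-quotient bound to be strict.
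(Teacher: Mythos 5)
The paper itself gives no proof of this lemma: it is imported directly from Pinelis \cite{Pinelis-2002-JIPAM}, with only a pointer to the literature, so there is no internal argument to compare yours against. Taken on its own, your proposal is the classical self-contained proof of the monotone l'Hospital rule and it is essentially correct: Darboux's theorem to fix the sign of $q'$ (rightly so, since $q'$ need not be continuous), the reduction of the four sign/endpoint cases to one representative case, the observation that for $qq'>0$ the sign of $(p/q)'$ is that of $\frac{p'}{q'}-\frac{p}{q}$, and the Cauchy mean value theorem combined with the vanishing endpoint limit to obtain $\frac{p(z)}{q(z)}\le\frac{p'(z)}{q'(z)}$. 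You also correctly identify the one genuinely asymmetric point: in the case $p(v-)=q(v-)=0$ one has $q<0$, and the two sign reversals (in the derivative identity and in the Cauchy estimate taken toward $v$) cancel. The single soft spot is your closing remark on strictness: since the limit $t\to u+$ degrades strict inequalities to weak ones, your argument as written only delivers $(p/q)'\ge 0$, i.e.\ that $p/q$ is non-decreasing. If ``increasing'' is read strictly, one more line is needed: were $p/q$ equal to a constant $c$ on some subinterval, then $p=cq$ and hence $p'/q'=c$ there, contradicting strict monotonicity of $p'/q'$; a non-decreasing function constant on no subinterval is strictly increasing. With that patch the proof is complete. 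What your route buys is that the paper would become self-contained; what the paper's citation buys is brevity and deference to Pinelis's more general formulation, which is standard practice for this well-known tool.
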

In the literature, Lemma \ref{lem:LMR} is referred to as l'Hospital rule for monotonicy. It has become a remarkable tool in proving various  results in mathematical analysis.

\begin{lemma}\label{lem:Increasing-Funct-Sinh-Over-Shi}
For $z>0$, the function $T(z)=\frac{\sinh(z)}{\mathrm{Shi}(z)}$ is increasing.  
\end{lemma}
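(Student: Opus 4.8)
The plan is to invoke the l'Hospital rule for monotonicity (Lemma \ref{lem:LMR}) on $(0,\infty)$ with the pairing $p(z)=\sinh(z)$ and $q(z)=\mathrm{Shi}(z)$. Both are continuous and differentiable on $(0,\infty)$, and since $\sinh(0)=0$ and $\mathrm{Shi}(0)=0$ we have $p(0^+)=q(0^+)=0$, which matches the left-endpoint hypothesis of the lemma. First I would check the non-vanishing requirements: for $z>0$ we have $q(z)=\mathrm{Shi}(z)>0$, and by \eqref{eqn:Hyper-Sinc-1st-Deri} the derivative $q'(z)=\mathrm{Shi}'(z)=\frac{\sinh(z)}{z}>0$ as well, so $q$ and $q'$ are nonzero throughout the interval.

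The next step is to compute the auxiliary ratio and reduce the claim to a cleaner monotonicity statement. Using \eqref{eqn:Hyper-Sinc-1st-Deri} again,
\[
\frac{p'(z)}{q'(z)}=\frac{\cosh(z)}{\sinh(z)/z}=\frac{z\cosh(z)}{\sinh(z)}=z\coth(z),
\]
so it suffices to show that $z\coth(z)$ is increasing on $(0,\infty)$.

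The main (and essentially only) computation is to verify this by differentiating:
\[
\frac{d}{dz}\bigl[z\coth(z)\bigr]=\coth(z)-\frac{z}{\sinh^2(z)}=\frac{\sinh(z)\cosh(z)-z}{\sinh^2(z)}=\frac{\tfrac12\sinh(2z)-z}{\sinh^2(z)}.
\]
Since $\sinh(2z)>2z$ for $z>0$ (the hyperbolic sine exceeds its argument on the positive axis), the numerator is strictly positive, so $\frac{p'}{q'}=z\coth(z)$ is increasing. Applying Lemma \ref{lem:LMR} then gives that $\frac{p(z)}{q(z)}=\frac{\sinh(z)}{\mathrm{Shi}(z)}=T(z)$ is increasing on $(0,\infty)$, which is the desired conclusion. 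I expect the only real obstacle to be choosing the correct pair $(p,q)=(\sinh,\mathrm{Shi})$ so that the monotone-ratio machinery applies and the reduction to $z\coth(z)$ emerges; after that, the inequality $\sinh(2z)>2z$ is elementary.
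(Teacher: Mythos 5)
Your proposal is correct and follows essentially the same route as the paper: both invoke the l'Hospital rule for monotonicity (Lemma \ref{lem:LMR}) with $p=\sinh$, $q=\mathrm{Shi}$, reduce to showing $\frac{z\cosh(z)}{\sinh(z)}$ is increasing, and verify this by direct differentiation, where the positivity of the numerator $\sinh(z)\cosh(z)-z$ follows from your $\sinh(2z)>2z$ exactly as it does from the paper's $\cosh(z)>1$ and $\sinh(z)>z$. If anything, your write-up is slightly more careful, since you explicitly check the non-vanishing hypotheses on $q$ and $q'$ that the paper leaves implicit.
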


\begin{proof}
For $z\in(0,\infty)$,  we have
\begin{equation*}
T(z)=\frac{\sinh(z)}{\mathrm{Shi}(z)} = \frac{p_1(z)}{q_1(z)} ,
\end{equation*}
where $p_1(z)=\sinh(z)$, $q_1(z)=\mathrm{Shi}(z)$ and $p_1(0)=q_1(0)=0$. Then
\begin{equation*}
\frac{p'_1(z)}{q'_1(z)}=\frac{z \cosh(z)}{\sinh(z)}= \frac{p_2(z)}{q_2(z)} 
\end{equation*}
where $p_2(z)=z \cosh(z)$, $q_2(z)=\sinh(z)$ and $p_2(0)=q_2(0)=0$. Then
\begin{align*}
\sinh^2(z) \left( \frac{p_2(z)}{q_2(z)} \right)'&= \left[ \cosh(z) +z\sinh(z) \right]\sinh(z)-z\cosh^2(z) \\
&=\cosh(z)\sinh(z)+z\left[\sinh^2(z)-\cosh^2(z) \right] \\
&=\cosh(z)\sinh(z)-z \\
&>0
\end{align*}
since $\cosh(z)>1$ and $\sinh(z)>z$ for $z>0$.
Thus, $\frac{p_1'(z)}{q_1'(z)}$ is increasing. Hence by Lemma \ref{lem:LMR}, the function $\frac{p_1(z)}{q_1(z)}$ is also  increasing. This completes the proof.
\end{proof}

\begin{theorem}\label{thm:Product-of-Shi}
For $z>0$, the inequality
\begin{equation}\label{eqn:Product-of-Shi}
\mathrm{Shi}(z) \mathrm{Shi}(1/z) \geq \left(\int_{0}^{1} \frac{\sinh(t)}{t}dt \right)^2 \approx 1.11778
\end{equation}
holds. Equality is attained if $z=1$.
\end{theorem}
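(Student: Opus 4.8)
The plan is to follow the template of Theorem~\ref{thm:Sum-of-Shi}, replacing the sum by the product and exploiting Lemma~\ref{lem:Increasing-Funct-Sinh-Over-Shi}. The equality case $z=1$ is immediate, so I would set $Q(z)=\mathrm{Shi}(z)\,\mathrm{Shi}(1/z)$ on $(0,1)\cup(1,\infty)$ and study its monotonicity. Using $\mathrm{Shi}'(z)=\frac{\sinh(z)}{z}$ together with the chain rule (so that $\frac{d}{dz}\mathrm{Shi}(1/z)=-\frac{\sinh(1/z)}{z}$), I would compute
\begin{equation*}
z Q'(z)=\sinh(z)\,\mathrm{Shi}(1/z)-\sinh(1/z)\,\mathrm{Shi}(z)=:E(z).
\end{equation*}
Thus the sign of $Q'(z)$ on each interval is governed entirely by the sign of $E(z)$.

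The key step is to recognize the positive factor $\mathrm{Shi}(z)\,\mathrm{Shi}(1/z)$ hidden in $E(z)$. Dividing through by it gives
\begin{equation*}
\frac{E(z)}{\mathrm{Shi}(z)\,\mathrm{Shi}(1/z)}=\frac{\sinh(z)}{\mathrm{Shi}(z)}-\frac{\sinh(1/z)}{\mathrm{Shi}(1/z)}=T(z)-T(1/z),
\end{equation*}
where $T(z)=\frac{\sinh(z)}{\mathrm{Shi}(z)}$ is exactly the function shown to be increasing in Lemma~\ref{lem:Increasing-Funct-Sinh-Over-Shi}. Hence $E(z)$ and $T(z)-T(1/z)$ share the same sign. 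For $z>1$ we have $z>1/z$, so the monotonicity of $T$ yields $T(z)>T(1/z)$ and thus $E(z)>0$; for $z\in(0,1)$ we have $z<1/z$ and thus $E(z)<0$. Consequently $Q'(z)<0$ on $(0,1)$ and $Q'(z)>0$ on $(1,\infty)$.

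Therefore $Q$ is decreasing on $(0,1)$ and increasing on $(1,\infty)$, so on both intervals
\begin{equation*}
Q(z)>\lim_{z\to1}Q(z)=\mathrm{Shi}(1)^2=\left(\int_{0}^{1}\frac{\sinh(t)}{t}\,dt\right)^2\approx 1.11778,
\end{equation*}
with equality at $z=1$, which is the claim. The main obstacle is conceptual rather than computational: a direct estimation of $E(z)$ looks awkward, and the whole argument becomes clean only once one factors out $\mathrm{Shi}(z)\,\mathrm{Shi}(1/z)$ to reduce everything to the already-established monotonicity of $T=\sinh/\mathrm{Shi}$. Given Lemma~\ref{lem:Increasing-Funct-Sinh-Over-Shi}, the remaining work is routine.
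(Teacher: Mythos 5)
Your proposal is correct and is essentially the paper's own proof: the paper works with $\theta(z)=\ln Q(z)$ and shows $z\theta'(z)=\frac{\sinh(z)}{\mathrm{Shi}(z)}-\frac{\sinh(1/z)}{\mathrm{Shi}(1/z)}$, which is exactly your step of dividing $zQ'(z)$ by the positive quantity $\mathrm{Shi}(z)\,\mathrm{Shi}(1/z)$. Both arguments then conclude identically from the monotonicity of $T=\sinh/\mathrm{Shi}$ established in Lemma~\ref{lem:Increasing-Funct-Sinh-Over-Shi}.
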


\begin{proof}
The case for $z=1$ is easily seen. And so, let $Q(z)=\mathrm{Shi}(z)\mathrm{Shi}(1/z)$ and $\theta(z)=\ln Q(z)$ for $z\in(0,1)\cup(1,\infty)$. Then
\begin{align*}
z\theta'(z)&= z \frac{\mathrm{Shi}'(z)}{\mathrm{Shi}(z)} - \frac{1}{z^2}\frac{\mathrm{Shi}'(1/z)}{\mathrm{Shi}(1/z)}\\
&=\frac{\sinh(z)}{\mathrm{Shi}(z)} - \frac{\sinh(1/z)}{\mathrm{Shi}(1/z)} \\
&:=H(z).
\end{align*}
Because of Lemma \ref{lem:Increasing-Funct-Sinh-Over-Shi}, then $H(z)<0$ if $z\in(0,1)$ and $H(z)>0$ if $z\in(1,\infty)$. Subsequently, $Q(z)$ is decreasing on $(0,1)$ and increasing on $(1,\infty)$. Therefore, on both intervals, we have
\begin{equation*}
Q(z)>\lim_{z\to1}Q(z)=(\mathrm{Shi}(1))^2=\left(\int_{0}^{1} \frac{\sinh(t)}{t}dt \right)^2 \approx 1.11778
\end{equation*}
completing the proof.
\end{proof}

\begin{lemma}\label{lem:Decreasing-Funct-Sinh-Over-Shi2}
For $z>0$, the function $V(z)=\mathrm{Shi}(z) - \sinh(z)$ is decreasing and the inequality 
\begin{equation}\label{eqn:Ineq-Shi-Minus-Sinh}
 \mathrm{Shi}(z)  - \sinh(z) < 0
\end{equation}
holds.
\end{lemma}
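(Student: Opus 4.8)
The plan is to establish the decreasing claim via the sign of $V'$ and then read off the inequality from the boundary value at the origin. First I would differentiate $V$, using \eqref{eqn:Hyper-Sinc-1st-Deri} in the form $\mathrm{Shi}'(z)=\frac{\sinh(z)}{z}$ together with $\frac{d}{dz}\sinh(z)=\cosh(z)$, to obtain
\[
V'(z)=\frac{\sinh(z)}{z}-\cosh(z).
\]

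Next I would invoke the known inequality \eqref{eqn:Known-Ineq}, namely $\cosh(z)>\frac{\sinh(z)}{z}$ for $z>0$ (recorded in the Remark following \eqref{eqn:Hyper-Sinc-2nd-Deri}), which immediately yields $V'(z)<0$ on $(0,\infty)$. Hence $V$ is strictly decreasing there. Finally, for the inequality \eqref{eqn:Ineq-Shi-Minus-Sinh} I would compute the one-sided limit $\lim_{z\to 0^+}V(z)=\mathrm{Shi}(0)-\sinh(0)=0$, where the vanishing of both functions at the origin is clear from the series \eqref{eqn:HyperSin-Integral_SeriesRep} and the Maclaurin expansion of $\sinh$. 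Since $V$ is decreasing and continuous up to $0$, it follows that $V(z)<\lim_{z\to 0^+}V(z)=0$ for every $z>0$, which is precisely \eqref{eqn:Ineq-Shi-Minus-Sinh}.

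There is no substantial obstacle here; the only point requiring care is justifying the sign of $V'$, and that is handled entirely by \eqref{eqn:Known-Ineq}. As an alternative route to \eqref{eqn:Ineq-Shi-Minus-Sinh} that bypasses calculus, I could compare the two functions coefficientwise using \eqref{eqn:HyperSin-Integral_SeriesRep} and $\sinh(z)=\sum_{r=0}^{\infty}\frac{z^{2r+1}}{(2r+1)!}$: the leading coefficients agree at $r=0$, while $\frac{1}{(2r+1)(2r+1)!}<\frac{1}{(2r+1)!}$ for every $r\geq 1$, so $\mathrm{Shi}(z)<\sinh(z)$ for $z>0$ follows at once. I would nonetheless present the monotonicity argument as the primary proof, since it simultaneously delivers the decreasing assertion and fits the style of the preceding results.
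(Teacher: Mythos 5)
Your proposal is correct and follows essentially the same route as the paper: differentiate $V$, apply the known inequality \eqref{eqn:Known-Ineq} to get $V'(z)<0$, and conclude $V(z)<\lim_{z\to 0^+}V(z)=0$. The coefficientwise series comparison you mention as an alternative is also valid, but your primary argument matches the paper's proof exactly.
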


\begin{proof}
We have
\begin{align*}
V'(z)&=\mathrm{Shi}'(z) -\cosh(z) \\
&=\frac{\sinh(z)}{z} - \cosh(z) <0
\end{align*}
as a result of \eqref{eqn:Known-Ineq}. Hence
\begin{equation*}
V(z)<\lim_{z\to0}V(z)=0
\end{equation*}
which proves \eqref{eqn:Ineq-Shi-Minus-Sinh}.
\end{proof}

\begin{lemma}\label{lem:Decreasing-Funct-Sinh-Over-Shi2}
For $z>0$, the function $K(z)=\frac{\sinh(z)}{\mathrm{Shi}^2(z)}$ is decreasing.  
\end{lemma}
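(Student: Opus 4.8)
The plan is to show directly that $K'(z)<0$ for $z>0$. Differentiating $K(z)=\frac{\sinh(z)}{\mathrm{Shi}^2(z)}$ and cancelling one factor of $\mathrm{Shi}(z)$ gives
\[
K'(z)=\frac{\cosh(z)\,\mathrm{Shi}(z)-2\sinh(z)\,\mathrm{Shi}'(z)}{\mathrm{Shi}^3(z)}.
\]
Since $\mathrm{Shi}'(z)=\frac{\sinh(z)}{z}$ by \eqref{eqn:Hyper-Sinc-1st-Deri}, the numerator equals $\frac{1}{z}\bigl[z\cosh(z)\,\mathrm{Shi}(z)-2\sinh^2(z)\bigr]$, and because $\mathrm{Shi}^3(z)>0$ and $z>0$, the whole problem reduces to establishing the single inequality
\[
z\cosh(z)\,\mathrm{Shi}(z)<2\sinh^2(z),\qquad z>0.
\]

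To prove this core inequality I would first divide through by $\cosh(z)>0$, rewriting it in the equivalent and more tractable form $z\,\mathrm{Shi}(z)<2\sinh(z)\tanh(z)$. I then introduce the auxiliary function $F(z)=2\sinh(z)\tanh(z)-z\,\mathrm{Shi}(z)$, which satisfies $F(0)=0$, and aim to show $F'(z)>0$ for $z>0$ so that $F(z)>F(0)=0$. Differentiating, and again using $\mathrm{Shi}'(z)=\frac{\sinh(z)}{z}$ (so that the factor $z$ in $z\,\mathrm{Shi}(z)$ cancels cleanly), I expect
\[
F'(z)=\sinh(z)-\mathrm{Shi}(z)+\frac{2\sinh(z)}{\cosh^2(z)}.
\]
The term $\frac{2\sinh(z)}{\cosh^2(z)}$ is manifestly positive, and $\sinh(z)-\mathrm{Shi}(z)>0$ is exactly the inequality \eqref{eqn:Ineq-Shi-Minus-Sinh} furnished by the preceding lemma; hence $F'(z)>0$, giving $F(z)>0$, the core inequality, and finally $K'(z)<0$.

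The main obstacle is the choice of the right reformulation before differentiating. If one instead differentiates the quantity $2\sinh^2(z)-z\cosh(z)\,\mathrm{Shi}(z)$ directly, the resulting expression does not have an obvious sign, and the naive substitution of the bound $\mathrm{Shi}(z)<\sinh(z)$ into it is too weak to close the estimate for large $z$ (it would only yield the false-for-large-$z$ requirement $z\coth(z)\le 2$). Dividing by $\cosh(z)$ first is what makes the $z$ cancel against $\mathrm{Shi}'(z)=\frac{\sinh(z)}{z}$ and isolates a remainder controlled precisely by the already-established inequality $\mathrm{Shi}(z)<\sinh(z)$; identifying this cancellation is the only non-routine step, after which the argument is a short monotonicity computation.
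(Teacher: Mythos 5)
Your proof is correct, and it takes a genuinely different route from the paper's. The paper invokes the l'Hospital rule for monotonicity (Lemma \ref{lem:LMR}): writing $K(z)=p_1(z)/q_1(z)$ with $p_1(z)=\sinh(z)$, $q_1(z)=\mathrm{Shi}^2(z)$, it passes to $p_1'(z)/q_1'(z)=\frac{z\cosh(z)}{2\mathrm{Shi}(z)\sinh(z)}$ and shows this second ratio is decreasing by a sign computation that, like yours, ultimately rests on $\mathrm{Shi}(z)<\sinh(z)$, i.e.\ \eqref{eqn:Ineq-Shi-Minus-Sinh}. You instead differentiate $K$ directly, reduce $K'(z)<0$ to the single inequality $z\cosh(z)\,\mathrm{Shi}(z)<2\sinh^2(z)$, and prove it by showing that $F(z)=2\sinh(z)\tanh(z)-z\,\mathrm{Shi}(z)$ increases from $F(0)=0$. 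Your two derivative computations check out: $K'(z)=\frac{\cosh(z)\,\mathrm{Shi}(z)-2\sinh(z)\,\mathrm{Shi}'(z)}{\mathrm{Shi}^3(z)}$ and, using \eqref{eqn:Hyper-Sinc-1st-Deri}, $F'(z)=\sinh(z)-\mathrm{Shi}(z)+\frac{2\sinh(z)}{\cosh^2(z)}>0$ by \eqref{eqn:Ineq-Shi-Minus-Sinh}; since $F$ is continuous at $0$ with $F(0)=0$, the conclusion $F(z)>0$, and hence $K'(z)<0$, follows. As for what each approach buys: yours is more elementary, avoiding the Pinelis lemma altogether, and it isolates as an explicit byproduct exactly the inequality $z\,\mathrm{Shi}(z)\cosh(z)<2\sinh^2(z)$ that the paper only records afterwards, in \eqref{eqn:New-Ineq-2}, as a consequence of the lemma; the paper's route has the merit of being uniform with its proof of Lemma \ref{lem:Increasing-Funct-Sinh-Over-Shi}, so that the increasing companion $\sinh(z)/\mathrm{Shi}(z)$ and the decreasing $\sinh(z)/\mathrm{Shi}^2(z)$ are handled by one and the same technique. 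One immaterial slip: in your closing remark, the requirement produced by the naive substitution of $\mathrm{Shi}(z)<\sinh(z)$ is $z\tanh(z)<2$ rather than $z\coth(z)\le 2$; either way it fails for large $z$, so the point you are making stands, and that remark is not part of the proof itself.
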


\begin{proof}
For $z\in(0,\infty)$,  we have
\begin{equation*}
K(z)=\frac{\sinh(z)}{\mathrm{Shi}^2(z)} = \frac{p_1(z)}{q_1(z)} ,
\end{equation*}
where $p_1(z)=\sinh(z)$, $q_1(z)=\mathrm{Shi}^2(z)$ and $p_1(0)=q_1(0)=0$. Then
\begin{equation*}
\frac{p'_1(z)}{q'_1(z)}=\frac{z \cosh(z)}{2\mathrm{Shi}(z)\sinh(z)}= \frac{p_2(z)}{q_2(z)} 
\end{equation*}
where $p_2(z)=z \cosh(z)$, $q_2(z)=2\mathrm{Shi}(z)\sinh(z)$ and $p_2(0)=q_2(0)=0$. Then
\begin{align*}
2\mathrm{Shi}^2(z) \left( \frac{p_2(z)}{q_2(z)} \right)'&= \mathrm{Shi}(z)\coth(z) - z \mathrm{Shi}(z)\mathrm{cosech}^2(z) - \cosh(z) \\
&=\cosh(z) \left[ \frac{\mathrm{Shi}(z)}{\sinh(z)} - 1 \right] -  z \mathrm{Shi}(z)\mathrm{cosech}^2(z) \\
&<0
\end{align*}
as a result of \eqref{eqn:Ineq-Shi-Minus-Sinh}.
Thus, $\frac{p_1'(z)}{q_1'(z)}$ is decreasing. Hence by Lemma \ref{lem:LMR}, the function $\frac{p_1(z)}{q_1(z)}$ is also decreasing. This completes the proof.
\end{proof}

\begin{remark}
The increasing property of the function $\frac{\sinh(z)}{\mathrm{Shi}(z)}$ is equivalent to
\begin{equation}\label{eqn:New-Ineq-1}
z \mathrm{Shi}(z) \cosh(z) - \sinh^2(z) > 0.
\end{equation}
Also, the decreasing property of the function $\frac{\sinh(z)}{\mathrm{Shi}^2(z)}$ is equivalent to
\begin{equation}\label{eqn:New-Ineq-2}
z \mathrm{Shi}(z) \cosh(z) - 2\sinh^2(z) < 0.
\end{equation}
Combining \eqref{eqn:New-Ineq-1} and \eqref{eqn:New-Ineq-2} yields
\begin{equation}\label{eqn:New-Ineq-3}
\sinh^2(z)  < z \mathrm{Shi}(z) \cosh(z) < 2\sinh^2(z) 
\end{equation}
which is also equivalent to
\begin{equation}\label{eqn:New-Ineq-4}
\frac{\tanh(z)}{z}  < \frac{\mathrm{Shi}(z)}{\sinh(z)} < 2\frac{\tanh(z)}{z} .
\end{equation}
\end{remark}

\begin{theorem}\label{thm:Bounds-for-Shi}
For $z>0$, the inequality
\begin{equation}\label{eqn:Bounds-for-Shi}
\frac{z}{2} + \frac{\cosh(z)-1}{z} < \mathrm{Shi}(z) < 2\left(\frac{\cosh(z)-1}{z}\right)
\end{equation}
holds.
\end{theorem}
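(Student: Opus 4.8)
The plan is to treat the two bounds separately: in each case I form the difference of the two sides, check that it vanishes as $z\to0^+$, and then show that its derivative has a fixed sign on $(0,\infty)$. The essential simplification is that the term $\mathrm{Shi}'(z)=\frac{\sinh(z)}{z}$ from \eqref{eqn:Hyper-Sinc-1st-Deri} cancels the leading contribution coming from differentiating $\frac{\cosh(z)-1}{z}$, so that each first derivative collapses to an elementary expression whose sign can be read off by one further differentiation.

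For the lower bound I would set $h(z)=\mathrm{Shi}(z)-\frac{z}{2}-\frac{\cosh(z)-1}{z}$. Using $\mathrm{Shi}(z)\to 0$ and the behaviour $\frac{\cosh(z)-1}{z}\sim\frac{z}{2}$ one checks $\lim_{z\to0^+}h(z)=0$. Differentiating and invoking \eqref{eqn:Hyper-Sinc-1st-Deri}, the $\frac{\sinh(z)}{z}$ contributions cancel and I expect
\[
h'(z)=\frac{\cosh(z)-1}{z^2}-\frac12=\frac{2\cosh(z)-2-z^2}{2z^2}.
\]
Positivity of $h'$ then reduces to $M(z):=2\cosh(z)-2-z^2>0$; since $M(0)=0$ and $M'(z)=2\sinh(z)-2z>0$ (because $\sinh(z)>z$ for $z>0$), we get $M(z)>0$, hence $h'(z)>0$ and $h(z)>\lim_{z\to0^+}h(z)=0$, which is the left inequality in \eqref{eqn:Bounds-for-Shi}.

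For the upper bound I would set $g(z)=2\frac{\cosh(z)-1}{z}-\mathrm{Shi}(z)$, again with $\lim_{z\to0^+}g(z)=0$. Differentiating and simplifying, the cross terms cancel as before, and I expect
\[
g'(z)=\frac{z\sinh(z)-2\cosh(z)+2}{z^2}.
\]
Here positivity of $g'$ reduces to $N(z):=z\sinh(z)-2\cosh(z)+2>0$; since $N(0)=0$ and $N'(z)=z\cosh(z)-\sinh(z)>0$ by the known inequality \eqref{eqn:Known-Ineq} (equivalently $\cosh(z)>\frac{\sinh(z)}{z}$), we obtain $N(z)>0$, hence $g'(z)>0$ and $g(z)>0$, which is the right inequality.

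The computations are routine; the only points requiring care are the correct differentiation of $\frac{\cosh(z)-1}{z}$ and the verification that the cross terms cancel so that each first derivative collapses to a single elementary quotient. Once that bookkeeping is in place, the entire argument rests on the two standard facts $\sinh(z)>z$ and $z\cosh(z)>\sinh(z)$ already recorded in the excerpt, so I do not anticipate any genuine obstacle. An alternative route through the ratio bounds \eqref{eqn:New-Ineq-4} looks tempting, but since $\frac{\sinh^2(z)}{z\cosh(z)}$ does not coincide with $\frac{\cosh(z)-1}{z}$, that path appears not to be tight enough, and the direct derivative argument above is the cleaner one.
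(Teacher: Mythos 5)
Your proof is correct, but it takes a genuinely different route from the paper's. Your derivative computations check out: the $\frac{\sinh(z)}{z}$ contributions do cancel, giving exactly $h'(z)=\frac{2\cosh(z)-2-z^2}{2z^2}$ and $g'(z)=\frac{z\sinh(z)-2\cosh(z)+2}{z^2}$, and the two elementary facts $\sinh(z)>z$ and $z\cosh(z)>\sinh(z)$ (the latter being \eqref{eqn:Known-Ineq}) then force both differences to increase from their limit $0$ at $z\to 0^+$. The paper argues in the opposite direction: it takes the sandwich $t<\mathrm{Shi}(t)<\sinh(t)$, whose left half comes from the proof of Theorem \ref{thm:Shi-Ineqs} and whose right half is inequality \eqref{eqn:Ineq-Shi-Minus-Sinh}, integrates it over $(0,z)$, and evaluates $\int_{0}^{z}\mathrm{Shi}(t)\,dt = z\,\mathrm{Shi}(z)-\cosh(z)+1$ (integration by parts using $\mathrm{Shi}'(t)=\frac{\sinh(t)}{t}$), obtaining $\frac{z^2}{2} < z\,\mathrm{Shi}(z)-\cosh(z)+1 < \cosh(z)-1$, which becomes \eqref{eqn:Bounds-for-Shi} after dividing by $z$. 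The trade-off is clear: the paper's proof is shorter but leans on two nontrivial bounds for $\mathrm{Shi}$ established earlier in the paper, whereas yours is self-contained, using nothing about $\mathrm{Shi}$ beyond the derivative formula \eqref{eqn:Hyper-Sinc-1st-Deri} and two classical hyperbolic inequalities, so it could stand alone independently of the rest of Section 2.
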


\begin{proof}
Recall that $t<\mathrm{Shi}(t)<\sinh(t)$ for $t>0$. Then, integrating over the interval $(0,z)$, we have 
\begin{equation*}
\int_{0}^{z}tdt<\int_{0}^{z}\mathrm{Shi}(t)dt<\int_{0}^{z}\sinh(t)dt
\end{equation*}
which gives
\begin{equation*}
\frac{z^2}{2} < z \mathrm{Shi}(z) -\cosh(z) +1 <\cosh(z)-1
\end{equation*}
and this simplifies to \eqref{eqn:Bounds-for-Shi}.
\end{proof}

\begin{theorem}\label{thm:HMI-Shi}
For $z>0$, the inequality
\begin{equation}\label{eqn:HMI-Shi}
\frac{2\mathrm{Shi}(z) \mathrm{Shi}(1/z)}{\mathrm{Shi}(z) + \mathrm{Shi}(1/z)} \leq \int_{0}^{1} \frac{\sinh(t)}{t}dt \approx 1.05725 
\end{equation}
holds. Equality is attained if $z=1$.
\end{theorem}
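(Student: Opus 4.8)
The plan is to recognize the left-hand side of \eqref{eqn:HMI-Shi} as the harmonic mean of $\mathrm{Shi}(z)$ and $\mathrm{Shi}(1/z)$, and to show that this harmonic mean attains its maximum value $\mathrm{Shi}(1)$ precisely at $z=1$. The cleanest route is to pass to the reciprocal. After disposing of the trivial case $z=1$ (as in Theorems \ref{thm:Sum-of-Shi} and \ref{thm:Product-of-Shi}), I would denote the left-hand side by $H(z)$ and define
\[
G(z) = \frac{1}{\mathrm{Shi}(z)} + \frac{1}{\mathrm{Shi}(1/z)}, \qquad z \in (0,1)\cup(1,\infty),
\]
and observe that $G(z) = 2/H(z)$. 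Since every quantity in sight is positive for $z>0$, the claimed bound $H(z) \le \mathrm{Shi}(1)$ is equivalent to $G(z) \ge 2/\mathrm{Shi}(1)$; hence it suffices to show that $G$ has a global minimum at $z=1$, where $G(1) = 2/\mathrm{Shi}(1)$.

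Next I would differentiate $G$. Using $\mathrm{Shi}'(z) = \sinh(z)/z$ from \eqref{eqn:Hyper-Sinc-1st-Deri} together with the chain rule on the term $\mathrm{Shi}(1/z)$, the derivative should simplify to
\[
G'(z) = \frac{1}{z}\left[\frac{\sinh(1/z)}{\mathrm{Shi}^2(1/z)} - \frac{\sinh(z)}{\mathrm{Shi}^2(z)}\right] = \frac{1}{z}\bigl[K(1/z) - K(z)\bigr],
\]
where $K(z) = \frac{\sinh(z)}{\mathrm{Shi}^2(z)}$ is exactly the function shown to be decreasing on $(0,\infty)$ in Lemma \ref{lem:Decreasing-Funct-Sinh-Over-Shi2}. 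This reduction is the crux of the argument: the whole problem collapses onto the monotonicity of $K$, in direct analogy with the way Theorem \ref{thm:Product-of-Shi} exploited the monotonicity of $\frac{\sinh(z)}{\mathrm{Shi}(z)}$.

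With $K$ decreasing in hand, the sign of $G'$ is immediate. For $z \in (0,1)$ we have $1/z > z$, so $K(1/z) < K(z)$ and hence $G'(z) < 0$; for $z \in (1,\infty)$ we have $1/z < z$, so $K(1/z) > K(z)$ and hence $G'(z) > 0$. Thus $G$ is decreasing on $(0,1)$ and increasing on $(1,\infty)$, so on either interval $G(z) > \lim_{z\to 1} G(z) = 2/\mathrm{Shi}(1)$. Together with the equality case at $z=1$, this yields $G(z) \ge 2/\mathrm{Shi}(1)$, which is equivalent to \eqref{eqn:HMI-Shi}.

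I do not anticipate a serious obstacle, since once the reduction is made, Lemma \ref{lem:Decreasing-Funct-Sinh-Over-Shi2} does all the work. The only non-routine ingredient is spotting that the derivative of the reciprocal harmonic mean untangles into $K(1/z) - K(z)$. The one place to be careful is the chain-rule sign on $\frac{d}{dz}\mathrm{Shi}(1/z) = -\sinh(1/z)/z$, which must be tracked correctly so that the bracketed expression comes out as $K(1/z) - K(z)$ rather than its negative; a sign slip there would spuriously reverse the monotonicity conclusion.
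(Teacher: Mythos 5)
Your proof is correct and follows essentially the same route as the paper's: both hinge on Lemma \ref{lem:Decreasing-Funct-Sinh-Over-Shi2} (the decreasing property of $K(z)=\frac{\sinh(z)}{\mathrm{Shi}^2(z)}$), reduce the sign of a derivative to the difference $K(1/z)-K(z)$, and conclude via monotonicity on $(0,1)$ and $(1,\infty)$ with the extremum at $z=1$. The only difference is cosmetic: the paper differentiates $\ln\Psi(z)$ and rearranges, whereas you differentiate the reciprocal $G(z)=2/\Psi(z)$ directly, which yields the identical expression $G'(z)=\frac{1}{z}\bigl[K(1/z)-K(z)\bigr]$ with slightly less algebra.
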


\begin{proof}
The case for $z=1$ is easily seen. On that note, let $\Psi(z)=\frac{2\mathrm{Shi}(z) \mathrm{Shi}(1/z)}{\mathrm{Shi}(z) + \mathrm{Shi}(1/z)}$ and $h(z)=\ln \Psi(z)$ for $z\in(0,1)\cup(1,\infty)$. Then
\begin{equation*}
h'(z)=\frac{\mathrm{Shi}'(z)}{\mathrm{Shi}(z)} - \frac{1}{z^2}\frac{\mathrm{Shi}'(1/z)}{\mathrm{Shi}(1/z)} - \frac{\mathrm{Shi}'(z)-\frac{1}{z^2}\mathrm{Shi}'(1/z)}{\mathrm{Shi}(z) + \mathrm{Shi}(1/z)}
\end{equation*}
which implies that
\begin{equation*}
z\left[ \mathrm{Shi}(z) + \mathrm{Shi}(1/z) \right]h'(z)=z\frac{\mathrm{Shi}'(z)}{\mathrm{Shi}(z)}\mathrm{Shi}(1/z)  - \frac{1}{z}\frac{\mathrm{Shi}'(1/z)}{\mathrm{Shi}(1/z)} \mathrm{Shi}(z).
\end{equation*}
This further gives rise to
\begin{align*}
z\left[ \frac{1}{\mathrm{Shi}(z) }+ \frac{1}{\mathrm{Shi}(1/z)} \right]h'(z)
&=z\frac{\mathrm{Shi}'(z)}{\mathrm{Shi}^2(z)} - \frac{1}{z}\frac{\mathrm{Shi}'(1/z)}{\mathrm{Shi}^2(1/z)} \\
&=\frac{\sinh(z)}{\mathrm{Shi}^2(z)} - \frac{\sinh(1/z)}{\mathrm{Shi}^2(1/z)} \\
&=D(z).
\end{align*}
Owing to Lemma \ref{lem:Decreasing-Funct-Sinh-Over-Shi2}, we have $D(z)>0$ if $z\in(0,1)$ and $D(z)<0$ if $z\in(1,\infty)$. Thus, $h(z)$ is increasing on $(0,1)$ and decreasing on $(1,\infty)$. Accordingly, $\Psi(z)$ is increasing on $(0,1)$ and decreasing on $(1,\infty)$. Therefore, on both intervals, we have
\begin{equation*}
\Psi(z)<\lim_{z\to1}\Psi(z)=\mathrm{Shi}(1)=\int_{0}^{1} \frac{\sinh(t)}{t}dt \approx 1.05725 
\end{equation*}
completing the proof.
\end{proof}

\begin{remark}
Theorem \ref{thm:HMI-Shi} can be interpreted to mean that, for $z>0$, the harmonic mean of $\mathrm{Shi}(z)$ and $\mathrm{Shi}(1/z)$ can never be greater than the quantity $\mathrm{Shi}(1)$. Inequality \eqref{eqn:HMI-Shi} can also be rearranged as 
\begin{equation}
\frac{1}{2}\left[ \frac{1}{\mathrm{Shi}(z) }+ \frac{1}{\mathrm{Shi}(1/z)} \right] \geq \left( \int_{0}^{1} \frac{\sinh(t)}{t}dt \right)^{-1}.
\end{equation}
\end{remark}

\begin{lemma}[\cite{Niculescu-2000-MIA}]\label{lem:Geo-Convexity-Rel}
Let the function $\alpha:I\subseteq (0,\infty) \rightarrow  (0,\infty)$ be differentiable. Then $\alpha(z)$ is is geometrically convex (concave) if and only if $\frac{z\alpha'(z)}{\alpha(z)}$ is increasing (decreasing) respectively.
\end{lemma}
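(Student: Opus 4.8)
The plan is to reduce geometric convexity to ordinary convexity by means of a logarithmic change of variables applied to both the domain and the range. Recall that $\alpha$ being geometrically convex on $I$ means that the inequality $\alpha(x^{\lambda}y^{1-\lambda})\leq \alpha(x)^{\lambda}\alpha(y)^{1-\lambda}$ holds for all $x,y\in I$ and all $\lambda\in[0,1]$. Since the hypotheses guarantee $I\subseteq(0,\infty)$ and $\alpha(z)>0$ throughout, I would introduce the auxiliary function $f(s)=\ln\alpha(e^{s})$ defined on the set $J=\{s:e^{s}\in I\}$, which is again an interval because $\ln$ is continuous and strictly increasing. Writing $x=e^{s}$ and $y=e^{t}$ and then taking logarithms of the defining inequality, one sees at once that the geometric convexity of $\alpha$ is equivalent to the ordinary convexity of $f$, that is, to $f(\lambda s+(1-\lambda)t)\leq \lambda f(s)+(1-\lambda)f(t)$.

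Next I would invoke the standard fact that a differentiable function is convex on an interval precisely when its derivative is increasing there; note that $f$ is differentiable because it is a composition of the differentiable map $s\mapsto e^{s}$, the positive differentiable function $\alpha$, and $\ln$. A direct application of the chain rule gives $f'(s)=\frac{e^{s}\alpha'(e^{s})}{\alpha(e^{s})}$, which is exactly $g(e^{s})$ where $g(z)=\frac{z\alpha'(z)}{\alpha(z)}$. The final link is the observation that, since $s\mapsto e^{s}$ is a strictly increasing bijection of $J$ onto $I$, the composite $g(e^{s})$ is increasing in $s$ if and only if $g$ is increasing in $z$. Chaining these equivalences shows that $\alpha$ is geometrically convex if and only if $\frac{z\alpha'(z)}{\alpha(z)}$ is increasing, and the concave statement follows by reversing every inequality in the argument.

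The only step needing genuine care is the legitimacy of the substitution, and this is entirely governed by the hypotheses: positivity of $\alpha$ is what lets $\ln\alpha$ be formed, the inclusion $I\subseteq(0,\infty)$ is what lets $e^{s}$ parametrize $I$, and the monotone continuity of $\ln$ is what keeps $J$ an interval. I therefore do not expect a real obstacle; the entire content of the lemma is that the exponential substitution simultaneously converts multiplicative convexity into additive convexity and converts the monotonicity of $\frac{z\alpha'(z)}{\alpha(z)}$ into the monotonicity of $f'$.
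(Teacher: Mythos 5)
Your argument is correct. Note, however, that the paper itself offers no proof of this lemma: it is quoted as a known result and attributed to Niculescu's paper, so there is no in-paper argument to compare against. What you have written is the standard (and essentially canonical) proof of this equivalence, which is in the spirit of how the cited source establishes it: the substitution $f(s)=\ln\alpha(e^{s})$ turns the multiplicative convexity inequality $\alpha(x^{\lambda}y^{1-\lambda})\leq\alpha(x)^{\lambda}\alpha(y)^{1-\lambda}$ into the additive one $f(\lambda s+(1-\lambda)t)\leq\lambda f(s)+(1-\lambda)f(t)$, the chain rule identifies $f'(s)$ with $\frac{z\alpha'(z)}{\alpha(z)}$ evaluated at $z=e^{s}$, and the derivative test for convexity of the differentiable function $f$, together with the fact that $s\mapsto e^{s}$ is a strictly increasing bijection of $J=\ln(I)$ onto $I$, closes the chain of equivalences. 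All the hypotheses you flag (positivity of $\alpha$, the inclusion $I\subseteq(0,\infty)$, $J$ being an interval) are indeed exactly what makes the substitution legitimate, and the concave case follows by reversing the inequalities. Your proposal thus supplies a complete, self-contained justification for a statement the paper merely cites, which is a strictly stronger contribution than what appears in the text.
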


\begin{theorem}\label{thm:GeoConvexity-Shi}
The function $\mathrm{Shi}(z)$ is geometrically convex on $(0,\infty)$. That is, the inequality
\begin{equation}\label{eqn:GeoConvexity-Shi}
 \mathrm{Shi}(u^{k}v^{1-k})\leq \left(\mathrm{Shi}(u) \right)^{k} \left(\mathrm{Shi}(v) \right)^{1-k}
\end{equation}
holds for $u>0$, $v>0$ and $k\in[0,1]$.
\end{theorem}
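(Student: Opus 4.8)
The plan is to invoke the Niculescu criterion for geometric convexity, namely Lemma \ref{lem:Geo-Convexity-Rel}, which reduces the problem to a monotonicity statement. Since $\mathrm{Shi}$ maps $(0,\infty)$ into $(0,\infty)$ and is differentiable, it suffices to show that the logarithmic-type quantity $\frac{z\,\mathrm{Shi}'(z)}{\mathrm{Shi}(z)}$ is increasing on $(0,\infty)$.

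The key observation is that this quantity simplifies to a function whose monotonicity has already been settled. First I would substitute the first-derivative identity \eqref{eqn:Hyper-Sinc-1st-Deri}, namely $\mathrm{Shi}'(z)=\frac{\sinh(z)}{z}$, to obtain
\begin{equation*}
\frac{z\,\mathrm{Shi}'(z)}{\mathrm{Shi}(z)} = \frac{z\cdot\frac{\sinh(z)}{z}}{\mathrm{Shi}(z)} = \frac{\sinh(z)}{\mathrm{Shi}(z)} = T(z),
\end{equation*}
where $T(z)$ is precisely the function treated in Lemma \ref{lem:Increasing-Funct-Sinh-Over-Shi}. That lemma establishes that $T(z)$ is increasing on $(0,\infty)$, so no further estimation is required.

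Having identified $\frac{z\,\mathrm{Shi}'(z)}{\mathrm{Shi}(z)}$ with the increasing function $T(z)$, I would then apply the forward implication of Lemma \ref{lem:Geo-Convexity-Rel} to conclude that $\mathrm{Shi}(z)$ is geometrically convex on $(0,\infty)$. Unwinding the definition of geometric convexity yields exactly the inequality \eqref{eqn:GeoConvexity-Shi} for $u>0$, $v>0$, and $k\in[0,1]$, which completes the argument.

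Because the analytic heavy lifting is done in Lemma \ref{lem:Increasing-Funct-Sinh-Over-Shi}, there is essentially no genuine obstacle here; the only point requiring care is the algebraic simplification showing that the Niculescu quotient collapses to $T(z)$. The endpoint cases $k=0$ and $k=1$ reduce \eqref{eqn:GeoConvexity-Shi} to trivial equalities and need no separate treatment.
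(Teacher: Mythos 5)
Your proposal is correct and follows exactly the paper's own argument: both reduce the claim via Niculescu's criterion (Lemma \ref{lem:Geo-Convexity-Rel}) to the monotonicity of $\frac{z\,\mathrm{Shi}'(z)}{\mathrm{Shi}(z)}$, use the identity $\mathrm{Shi}'(z)=\frac{\sinh(z)}{z}$ to collapse this quotient to $\frac{\sinh(z)}{\mathrm{Shi}(z)}$, and then cite Lemma \ref{lem:Increasing-Funct-Sinh-Over-Shi} for the increasing property. No gaps; the argument matches the paper step for step.
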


\begin{proof}
Applying Lemma \ref{lem:Increasing-Funct-Sinh-Over-Shi}, we have 
\begin{equation*}
\frac{d}{dz}\left(\frac{z \mathrm{Shi}'(z)}{\mathrm{Shi}(z)}\right)=\frac{d}{dz}\left(\frac{\sinh(z)}{\mathrm{Shi}(z)}\right)>0
\end{equation*}
and by Lemma \ref{lem:Geo-Convexity-Rel}, we conclude that $\mathrm{Shi}(z)$ is geometrically convex. This is equivalent to \eqref{eqn:GeoConvexity-Shi}.
\end{proof}

\begin{remark}
It is interesting to note that, by letting $u=z$, $v=1/z$ and $k=\frac{1}{2}$ in \eqref{eqn:GeoConvexity-Shi}, we recover the inequality \eqref{eqn:Product-of-Shi}.
\end{remark}

\section{Rational Bounds for the Hyperbolic Tangent Function}

In this section, as applications of the hyperbolic sine integral, we obtain some rational bounds for the hyperbolic tangent function.

\begin{theorem}\label{thm:Rational-Bounds-1}
For $z>0$, the inequalities
\begin{equation}\label{eqn:Rational-Bounds-1}
\frac{2z}{z^2+2} <\tanh(z) < \frac{z^3+6z}{3z^2+6}
\end{equation}
hold. 
\end{theorem}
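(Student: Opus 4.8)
The plan is to reduce each of the two inequalities to the positivity on $(0,\infty)$ of an auxiliary function that vanishes at the origin, and to obtain that positivity from a single differentiation. The key structural observation is that the two bounds are coupled: the derivative of the auxiliary function attached to the \emph{upper} bound factors through exactly the \emph{lower} bound. I would therefore establish the lower bound first and then feed it into the upper bound.

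For the lower bound I would clear denominators (legitimate since $z^2+2>0$ and $\cosh(z)>0$) and rewrite $\tanh(z) > \frac{2z}{z^2+2}$ as $(z^2+2)\sinh(z) - 2z\cosh(z) > 0$. Setting $g(z) = (z^2+2)\sinh(z) - 2z\cosh(z)$, a direct computation collapses to
\begin{equation*}
g'(z) = 2z\sinh(z) + (z^2+2)\cosh(z) - 2\cosh(z) - 2z\sinh(z) = z^2\cosh(z) > 0 .
\end{equation*}
Since $g(0)=0$, monotonicity forces $g(z)>0$ for $z>0$, which is the first inequality.

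For the upper bound I would again clear denominators and put $f(z) = z^3+6z - 3(z^2+2)\tanh(z)$, so that $\tanh(z) < \frac{z^3+6z}{3z^2+6}$ is equivalent to $f(z)>0$. Differentiating and using $\frac{d}{dz}\tanh(z) = 1 - \tanh^2(z)$, the $z^2$-terms and the constants cancel and I expect the clean factorisation
\begin{equation*}
f'(z) = 3\tanh(z)\big[(z^2+2)\tanh(z) - 2z\big].
\end{equation*}
The bracketed factor is positive precisely by the lower bound just proved, and $\tanh(z)>0$ for $z>0$, so $f'(z)>0$; together with $f(0)=0$ this gives the second inequality.

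The main obstacle is spotting the cancellation that makes $f'$ a multiple of the lower-bound expression; once that is in hand, both halves are one-line monotonicity arguments. I note in passing that one could instead attempt to chain the $\mathrm{Shi}$-estimates of the previous section — for instance \eqref{eqn:New-Ineq-4} together with Theorem \ref{thm:Bounds-for-Shi} — but those substitutions yield strictly weaker, non-rational bounds (the upper one degenerating to $2\tanh(z/2)$), so the direct route above is the one I would carry out.
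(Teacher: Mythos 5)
Your proposal is correct: both computations check out ($g'(z)=z^{2}\cosh(z)$, and $f'(z)=3\tanh(z)\left[(z^{2}+2)\tanh(z)-2z\right]$ after the cancellations you describe), and the logic of ``vanishes at $0$, strictly increasing thereafter'' is sound in both halves. However, it proves the theorem by a genuinely different mechanism than the paper. The paper reads both inequalities directly off derivatives of $\mathrm{Shi}$: it uses $\mathrm{Shi}^{(3)}(z)=\int_{0}^{1}t^{2}\cosh(zt)\,dt$ and $\mathrm{Shi}^{(4)}(z)=\int_{0}^{1}t^{3}\sinh(zt)\,dt$, whose integrands are manifestly positive for $z>0$, and evaluates these integrals in closed form as $\frac{(z^{2}+2)\sinh(z)-2z\cosh(z)}{z^{3}}$ and $\frac{(z^{3}+6z)\cosh(z)-(3z^{2}+6)\sinh(z)}{z^{4}}$; positivity of the numerators, hence both bounds, is then immediate, with no differentiation of auxiliary functions and no coupling of the two halves. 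Your route is more elementary and fully self-contained (it never mentions $\mathrm{Shi}$), and the factorization of $f'$ through the already-proved lower bound is a nice structural observation. What it gives up is uniformity: the paper's one-step argument repeats verbatim for $\mathrm{Shi}^{(m)}$, $m\geq 2$, generating the entire ladder of successively sharper bounds in Theorems \ref{thm:Rational-Bounds-2}--\ref{thm:Rational-Bounds-4}, and it is precisely what exhibits these inequalities as an application of the hyperbolic sine integral, which is the stated purpose of the section. One small correction to your closing aside: chaining \eqref{eqn:New-Ineq-4} with Theorem \ref{thm:Bounds-for-Shi} does produce the non-rational upper bound $2\tanh(z/2)$, but that bound is not strictly weaker than the right-hand side of \eqref{eqn:Rational-Bounds-1} everywhere --- it is weaker near the origin but sharper for large $z$, since it stays below $2$ while the rational bound grows linearly; this does not affect your argument.
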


\begin{proof}
By direct computations, we obtain
\begin{align*}
\mathrm{Shi}^{(3)}(z)&= \int_{0}^{1} t^{2}\cosh(zt)dt  \\
&= \frac{(z^2+2)\sinh(z)-2z \cosh(z)}{z^3}>0.
\end{align*}
Upon rearrangement, we obtain
\begin{equation*}
\tanh(z)>\frac{2z}{z^2+2}
\end{equation*}
which gives the left hand side of \eqref{eqn:Rational-Bounds-1}. Also, 
\begin{align*}
\mathrm{Shi}^{(4)}(z)&= \int_{0}^{1} t^{3}\sinh(zt)dt  \\
&= \frac{(z^3+6z)\cosh(z)-(3z^2+6) \sinh(z)}{z^4}>0.
\end{align*}
Hence
\begin{equation*}
\tanh(z)<\frac{z^3+6z}{3z^2+6}
\end{equation*}
which gives the right hand side of \eqref{eqn:Rational-Bounds-1}. This completes the proof.
\end{proof}

\begin{theorem}\label{thm:Rational-Bounds-2}
For $z>0$, the inequalities
\begin{equation}\label{eqn:Rational-Bounds-2}
\frac{4z^3+24z}{z^4+12z^2+24} <\tanh(z) < \frac{z^5+20z^3+120z}{5z^4+60z^2+120}
\end{equation}
hold. 
\end{theorem}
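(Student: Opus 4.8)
The plan is to mirror the strategy of the preceding theorem, but now exploit the fifth and sixth derivatives of $\mathrm{Shi}(z)$ instead of the third and fourth. By the integral representations for the derivatives in \eqref{eqn:Hyper-Sinc-Deri-Even}, we have
\[
\mathrm{Shi}^{(5)}(z)=\int_{0}^{1} t^{4}\cosh(zt)\,dt, \qquad \mathrm{Shi}^{(6)}(z)=\int_{0}^{1} t^{5}\sinh(zt)\,dt,
\]
and each integrand is strictly positive for $z>0$, so both derivatives are positive. The entire content of the theorem will come from rewriting these two derivatives in closed form and reading off the sign condition.

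First I would compute $\mathrm{Shi}^{(5)}(z)$ explicitly. Using the reduction $\int_{0}^{1} t^{4}\cosh(zt)\,dt=\frac{\sinh(z)}{z}-\frac{4}{z}\int_{0}^{1} t^{3}\sinh(zt)\,dt$ together with the value of $\mathrm{Shi}^{(4)}(z)$ already obtained in Theorem \ref{thm:Rational-Bounds-1}, I expect to arrive at
\[
\mathrm{Shi}^{(5)}(z)=\frac{(z^{4}+12z^{2}+24)\sinh(z)-(4z^{3}+24z)\cosh(z)}{z^{5}}>0.
\]
Rearranging this inequality to $(z^{4}+12z^{2}+24)\sinh(z)>(4z^{3}+24z)\cosh(z)$ and dividing through by $\cosh(z)$ yields the left-hand inequality of \eqref{eqn:Rational-Bounds-2}.

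Next I would compute $\mathrm{Shi}^{(6)}(z)$ by the analogous reduction $\int_{0}^{1} t^{5}\sinh(zt)\,dt=\frac{\cosh(z)}{z}-\frac{5}{z}\int_{0}^{1} t^{4}\cosh(zt)\,dt$, substituting the closed form of $\mathrm{Shi}^{(5)}(z)$ just found, to get
\[
\mathrm{Shi}^{(6)}(z)=\frac{(z^{5}+20z^{3}+120z)\cosh(z)-(5z^{4}+60z^{2}+120)\sinh(z)}{z^{6}}>0.
\]
Positivity gives $(z^{5}+20z^{3}+120z)\cosh(z)>(5z^{4}+60z^{2}+120)\sinh(z)$, and dividing by $\cosh(z)$ produces the right-hand inequality of \eqref{eqn:Rational-Bounds-2}.

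The only obstacle here is bookkeeping: the repeated integration by parts must be carried out carefully so that the polynomial coefficients $12,24,20,120,60$ emerge correctly, and it is reassuring that the intermediate reduction recovers exactly the formula for $\mathrm{Shi}^{(4)}(z)$ established earlier. There is no genuine analytic difficulty, since the positivity of $\mathrm{Shi}^{(5)}(z)$ and $\mathrm{Shi}^{(6)}(z)$ is automatic from the nonnegativity of the integrands; the proof is purely a matter of verifying the two closed-form antiderivatives.
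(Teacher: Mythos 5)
Your proposal is correct and follows essentially the same route as the paper: both establish the two bounds by observing that $\mathrm{Shi}^{(5)}(z)=\int_{0}^{1}t^{4}\cosh(zt)\,dt>0$ and $\mathrm{Shi}^{(6)}(z)=\int_{0}^{1}t^{5}\sinh(zt)\,dt>0$, writing these derivatives in closed form, and rearranging the resulting inequalities. The only difference is cosmetic: you spell out the integration-by-parts reduction that produces the closed forms, whereas the paper simply states them as direct computations.
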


\begin{proof}
By direct computations, we obtain
\begin{align*}
\mathrm{Shi}^{(5)}(z)&= \int_{0}^{1} t^{4}\cosh(zt)dt  \\
&= \frac{(z^4+12z^2+24)\sinh(z)- (4z^3+24z) \cosh(z)}{z^5}>0.
\end{align*}
This implies that
\begin{equation*}
\tanh(z)> \frac{4z^3+24z}{z^4+12z^2+24}
\end{equation*}
which gives the left hand side of \eqref{eqn:Rational-Bounds-2}. Also, 
\begin{align*}
\mathrm{Shi}^{(6)}(z)&= \int_{0}^{1} t^{5}\sinh(zt)dt  \\
&= \frac{(z^5+20z^3+120z)\cosh(z)-(5z^4+60z^2+120) \sinh(z)}{z^6} \\
&>0.
\end{align*}
Hence
\begin{equation*}
\tanh(z) < \frac{z^5+20z^3+120z}{5z^4+60z^2+120}
\end{equation*}
which gives the right hand side of \eqref{eqn:Rational-Bounds-2}. This completes the proof.
\end{proof}

\begin{theorem}\label{thm:Rational-Bounds-3}
For $z>0$, the inequalities
\begin{equation}\label{eqn:Rational-Bounds-3}
\frac{6z^5+120z^3+720z}{z^6+30z^4+360z^2+720} <\tanh(z) < \frac{z^7+42z^5+840z^3+5040z}{7z^6+210z^4+2520z^2+5040}
\end{equation}
hold. 
\end{theorem}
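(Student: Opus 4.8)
The plan is to mimic the proofs of Theorems~\ref{thm:Rational-Bounds-1} and~\ref{thm:Rational-Bounds-2} by reading off the two bounds from the positivity of the seventh and eighth derivatives of $\mathrm{Shi}$. Positivity is automatic in either representation: the series~\eqref{eqn:HyperSin-Integral_SeriesRep} has only nonnegative coefficients, so every $\mathrm{Shi}^{(k)}(z)$ is positive for $z>0$, and equivalently each derivative is the integral over $(0,1]$ of a manifestly positive integrand through the formulas~\eqref{eqn:Hyper-Sinc-Deri-Even}.

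For the left-hand bound I would use the odd-order formula to write
\begin{equation*}
\mathrm{Shi}^{(7)}(z)=\int_{0}^{1} t^{6}\cosh(zt)\,dt ,
\end{equation*}
and evaluate it by repeated integration by parts; concretely, the integrals $I_n=\int_0^1 t^n\cosh(zt)\,dt$ and $J_n=\int_0^1 t^n\sinh(zt)\,dt$ satisfy the coupled recursion $I_n=\tfrac{\sinh z}{z}-\tfrac{n}{z}J_{n-1}$ and $J_n=\tfrac{\cosh z}{z}-\tfrac{n}{z}I_{n-1}$, which unwinds to the closed form
\begin{equation*}
\mathrm{Shi}^{(7)}(z)=\frac{(z^6+30z^4+360z^2+720)\sinh(z)-(6z^5+120z^3+720z)\cosh(z)}{z^7} .
\end{equation*}
Since this is positive for $z>0$, multiplying through by $z^7>0$ and dividing by $\cosh(z)$ yields the left inequality in~\eqref{eqn:Rational-Bounds-3}.

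For the right-hand bound I would use the even-order formula,
\begin{equation*}
\mathrm{Shi}^{(8)}(z)=\int_{0}^{1} t^{7}\sinh(zt)\,dt ,
\end{equation*}
whose closed form I anticipate to be
\begin{equation*}
\mathrm{Shi}^{(8)}(z)=\frac{(z^7+42z^5+840z^3+5040z)\cosh(z)-(7z^6+210z^4+2520z^2+5040)\sinh(z)}{z^8} .
\end{equation*}
Its positivity rearranges at once into the right inequality in~\eqref{eqn:Rational-Bounds-3}.

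There is no conceptual obstacle, since the sign of each derivative is guaranteed in advance by either representation; the only care required is the bookkeeping in the integration by parts, where the coefficient strings $30,360,720$ and $42,840,5040$, together with their differentiated counterparts $6,120,720$ and $7,210,2520,5040$, must be tracked precisely, as these are exactly the integers that appear in the stated rational bounds.
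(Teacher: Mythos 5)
Your proposal is correct and takes essentially the same route as the paper: the paper's proof likewise evaluates $\mathrm{Shi}^{(7)}(z)=\int_{0}^{1} t^{6}\cosh(zt)\,dt$ and $\mathrm{Shi}^{(8)}(z)=\int_{0}^{1} t^{7}\sinh(zt)\,dt$ in exactly the closed forms you anticipate and rearranges their positivity into the two sides of \eqref{eqn:Rational-Bounds-3}. Your integration-by-parts recursion reproduces the correct coefficient strings, and your explicit remark that positivity is automatic from the positive integrand (or the series \eqref{eqn:HyperSin-Integral_SeriesRep}) is the same justification the paper leaves implicit.
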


\begin{proof}
By direct computations, we obtain
\begin{align*}
\mathrm{Shi}^{(7)}(z)&= \int_{0}^{1} t^{6}\cosh(zt)dt  \\
&= \frac{(z^6+30z^4+360z^2+720)\sinh(z)- (6z^5+120z^3+720z) \cosh(z)}{z^7}\\
&>0.
\end{align*}
This implies that
\begin{equation*}
\tanh(z)> \frac{6z^5+120z^3+720z}{z^6+30z^4+360z^2+720} 
\end{equation*}
which gives the left hand side of \eqref{eqn:Rational-Bounds-3}. Also, 
\begin{align*}
&\mathrm{Shi}^{(8)}(z) \\
&= \int_{0}^{1} t^{7}\sinh(zt)dt  \\
&= \frac{(z^7+42z^5+840z^3+5040z)\cosh(z)-(7z^6+210z^4+2520z^2+5040) \sinh(z)}{z^8} \\
&>0.
\end{align*}
Hence
\begin{equation*}
\tanh(z) < \frac{z^7+42z^5+840z^3+5040z}{7z^6+210z^4+2520z^2+5040}
\end{equation*}
which gives the right hand side of \eqref{eqn:Rational-Bounds-3}. This completes the proof.
\end{proof}

\begin{theorem}\label{thm:Rational-Bounds-4}
For $z>0$, the inequalities
\begin{multline}\label{eqn:Rational-Bounds-4}
\frac{8z^7+336z^5+6720z^3+40320z}{z^8+56z^6+1680z^4+20160z^2+40320} <\tanh(z) \\
< \frac{z^9+72z^7+3024z^5+60480z^3+362880z}{9z^8+504z^6+15120z^4+181440z^2+362880}
\end{multline}
hold. 
\end{theorem}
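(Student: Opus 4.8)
The plan is to follow exactly the pattern established in Theorems \ref{thm:Rational-Bounds-1}, \ref{thm:Rational-Bounds-2} and \ref{thm:Rational-Bounds-3}, namely to exploit the positivity of the higher-order derivatives $\mathrm{Shi}^{(k)}(z)$ as given by the integral representations \eqref{eqn:Hyper-Sinc-Deri-Even}. For the two bounds in \eqref{eqn:Rational-Bounds-4} I would compute $\mathrm{Shi}^{(9)}(z)$ and $\mathrm{Shi}^{(10)}(z)$, each of which is manifestly positive on $(0,\infty)$ since the integrands $t^{8}\cosh(zt)$ and $t^{9}\sinh(zt)$ are positive there.

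First I would evaluate
\begin{equation*}
\mathrm{Shi}^{(9)}(z)=\int_{0}^{1} t^{8}\cosh(zt)\,dt
=\frac{(z^8+56z^6+1680z^4+20160z^2+40320)\sinh(z)-(8z^7+336z^5+6720z^3+40320z)\cosh(z)}{z^9},
\end{equation*}
which follows from repeated integration by parts (equivalently, by differentiating the closed forms appearing in the earlier theorems). Since $z^9>0$ for $z>0$, positivity of $\mathrm{Shi}^{(9)}(z)$ forces the numerator to be positive; dividing through by $\cosh(z)$ and rearranging yields the left-hand inequality of \eqref{eqn:Rational-Bounds-4}.

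Next I would evaluate
\begin{equation*}
\mathrm{Shi}^{(10)}(z)=\int_{0}^{1} t^{9}\sinh(zt)\,dt
=\frac{(z^9+72z^7+3024z^5+60480z^3+362880z)\cosh(z)-(9z^8+504z^6+15120z^4+181440z^2+362880)\sinh(z)}{z^{10}},
\end{equation*}
again positive on $(0,\infty)$ because the integrand is. The positive factor $z^{10}$ in the denominator lets me conclude the numerator is positive, and rearranging $\tanh(z)=\sinh(z)/\cosh(z)$ gives the right-hand inequality of \eqref{eqn:Rational-Bounds-4}.

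The only real obstacle here is purely computational: producing the correct integer coefficients in the two closed forms. These are essentially the coefficients arising from the integration-by-parts recursion $\int_0^1 t^{n}\cosh(zt)\,dt$ and $\int_0^1 t^{n}\sinh(zt)\,dt$, and they match the falling-factorial pattern already visible in \eqref{eqn:Rational-Bounds-3} (the constant terms $40320=8!$ and $362880=9!$, etc.). Once these closed forms are verified, the inequalities follow immediately from the sign of the denominator power $z^{k}$, with no monotonicity lemma or auxiliary function required; equality-type subtleties do not arise since the integrands are strictly positive for $z>0$.
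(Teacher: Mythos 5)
Your proposal is correct and takes essentially the same route as the paper's own proof: both evaluate $\mathrm{Shi}^{(9)}(z)=\int_{0}^{1}t^{8}\cosh(zt)\,dt$ and $\mathrm{Shi}^{(10)}(z)=\int_{0}^{1}t^{9}\sinh(zt)\,dt$ in closed form and deduce the two bounds from the manifest positivity of the integrands, with identical coefficients throughout. Indeed, your formula for the $\mathrm{Shi}^{(10)}$ numerator even repairs a small typographical slip in the paper, where the term $60480z^{3}+362880z$ appears with the plus sign missing.
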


\begin{proof}
By direct computations, we obtain
\begin{align*}
\mathrm{Shi}^{(9)}(z)&= \int_{0}^{1} t^{8}\cosh(zt)dt  \\
&= \frac{1}{z^9}\left[ (z^8+56z^6+1680z^4+20160z^2+40320)\sinh(z) \right. \\
& \quad \left. - (8z^7+336z^5+6720z^3+40320z) \cosh(z) \right] \\
&>0.
\end{align*}
This implies that
\begin{equation*}
\tanh(z)> \frac{8z^7+336z^5+6720z^3+40320z}{z^8+56z^6+1680z^4+20160z^2+40320}
\end{equation*}
which gives the left hand side of \eqref{eqn:Rational-Bounds-4}. Also, 
\begin{align*}
\mathrm{Shi}^{(10)}(z)&= \int_{0}^{1} t^{9}\sinh(zt)dt  \\
&= \frac{1}{z^{10}}\left[ (z^9+72z^7+3024z^5+60480z^3362880z)\cosh(z) \right. \\
& \quad \left. - (9z^8+504z^6+15120z^4+181440z^2+362880) \sinh(z) \right] \\
&>0.
\end{align*}
Hence
\begin{equation*}
\tanh(z) < \frac{z^9+72z^7+3024z^5+60480z^3+362880z}{9z^8+504z^6+15120z^4+181440z^2+362880}
\end{equation*}
which gives the right hand side of \eqref{eqn:Rational-Bounds-4}. This completes the proof.
\end{proof}

\begin{remark}
The bounds in \eqref{eqn:Rational-Bounds-4} are better than those in \eqref{eqn:Rational-Bounds-3}. The bounds in \eqref{eqn:Rational-Bounds-3} are also better than those in \eqref{eqn:Rational-Bounds-2}. And the bounds in \eqref{eqn:Rational-Bounds-2} are also better than those in \eqref{eqn:Rational-Bounds-1}. 
\end{remark}

\begin{remark}
Due to their monotonicity properties, for $m\ge2$, the derivatives of the hyperbolic sine integral, $\mathrm{Shi}^{(m)}(z)$ give rational bounds  for the hyperbolic tangent function. Particularly, odd derivatives give lower bounds and even derivatives give upper bounds. The corresponding bounds get better as $m$ increases. It is also observed that, the lower bounds obtained this way, are of the form $\frac{p'(z)}{p(z)}$ and the upper bounds are of the form $\frac{q(z)}{q'(z)}$ for some polynomials $p(z)$ and $q(z)$.
\end{remark}

As a byproduct of Theorem \ref{thm:Rational-Bounds-1}, we obtain the following result which  provides bounds for the hyperbolic cosine function.
\begin{corollary}\label{cor:Bounds-Cosh}
For $z>0$, the inequalities
\begin{equation}\label{eqn:Bounds-Cosh}
\frac{z^2+2}{2} <\cosh(z) < e^{\frac{z^2}{6}} \left( \frac{z^2+2}{2} \right)^{\frac{2}{3}}
\end{equation}
hold. 
\end{corollary}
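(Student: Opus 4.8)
The plan is to exploit that $\tanh$ is the logarithmic derivative of $\cosh$, namely $\frac{d}{dz}\ln\cosh(z)=\tanh(z)$, so that each one-sided rational bound in Theorem~\ref{thm:Rational-Bounds-1} can be integrated into a corresponding bound on $\cosh(z)$. The normalization is favorable: $\cosh(0)=1$, so $\ln\cosh(0)=0$, and the comparison functions I will construct are arranged to vanish at $z=0$ as well, so that integrating the inequalities in \eqref{eqn:Rational-Bounds-1} over $(0,z)$ and then exponentiating produces \eqref{eqn:Bounds-Cosh} directly.

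For the lower bound, I would first note that the left-hand rational function in \eqref{eqn:Rational-Bounds-1} is itself a logarithmic derivative, $\frac{2z}{z^2+2}=\frac{d}{dz}\ln(z^2+2)$. Hence the inequality $\tanh(z)>\frac{2z}{z^2+2}$ becomes $\frac{d}{dz}\ln\cosh(z)>\frac{d}{dz}\ln(z^2+2)$ for $z>0$; integrating from $0$ to $z$ and using $\cosh(0)=1$ gives $\ln\cosh(z)>\ln\frac{z^2+2}{2}$, that is, $\cosh(z)>\frac{z^2+2}{2}$, which is the left inequality of \eqref{eqn:Bounds-Cosh}.

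For the upper bound, the essential step is to produce an antiderivative of the right-hand rational function $\frac{z^3+6z}{3z^2+6}$. Performing the division and splitting, I expect $\frac{z^3+6z}{3z^2+6}=\frac{z}{3}+\frac{4z}{3(z^2+2)}$, whose primitive is $\frac{z^2}{6}+\frac{2}{3}\ln(z^2+2)$. Consequently $\tanh(z)<\frac{z^3+6z}{3z^2+6}$ reads $\frac{d}{dz}\ln\cosh(z)<\frac{d}{dz}\bigl[\frac{z^2}{6}+\frac{2}{3}\ln(z^2+2)\bigr]$; integrating from $0$ to $z$ yields $\ln\cosh(z)<\frac{z^2}{6}+\frac{2}{3}\ln\frac{z^2+2}{2}$, and exponentiating gives $\cosh(z)<e^{z^2/6}\bigl(\frac{z^2+2}{2}\bigr)^{2/3}$, the right inequality of \eqref{eqn:Bounds-Cosh}.

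The only real obstacle is computational rather than conceptual: one must correctly carry out the partial-fraction split of $\frac{z^3+6z}{3z^2+6}$ and confirm that its primitive matches exactly the logarithm of the stated upper bound, including the constant of integration that turns the additive $-\frac{2}{3}\ln 2$ into the factor $2^{-2/3}$ inside $(\frac{z^2+2}{2})^{2/3}$. Once this antiderivative is identified, both halves of \eqref{eqn:Bounds-Cosh} follow immediately from integrating the two sides of \eqref{eqn:Rational-Bounds-1} over $(0,z)$.
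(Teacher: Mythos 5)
Your proposal is correct and follows essentially the same route as the paper: both integrate the two sides of \eqref{eqn:Rational-Bounds-1} over $(0,z)$, using $\int_0^z \tanh(t)\,dt = \ln\cosh(z)$ and the antiderivatives $\ln(z^2+2)$ and $\frac{z^2}{6}+\frac{2}{3}\ln(z^2+2)$ of the rational bounds, and then exponentiate. Your computations (the split $\frac{z^3+6z}{3z^2+6}=\frac{z}{3}+\frac{4z}{3(z^2+2)}$ and the handling of the constants of integration) check out exactly.
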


\begin{proof}
By integrating \eqref{eqn:Rational-Bounds-1} over the interval $(0,z)$, we have
\begin{equation*}
\int_{0}^{z}\frac{2t}{t^2+2}dt <\int_{0}^{z} \tanh(t) dt < \int_{0}^{z} \frac{t^3+6t}{3t^2+6} dt
\end{equation*}
which gives
\begin{equation*}
\ln (z^2+2) - \ln 2 < \ln \cosh(z)  < \frac{z^2}{6} + \frac{2}{3} \ln (z^2+2) - \frac{2}{3}\ln2.
\end{equation*}
That is
\begin{equation*}
\ln \frac{z^2+2}{2}<\ln \cosh(z)  < \ln \left\{e^{\frac{z^2}{6}} \left( \frac{z^2+2}{2} \right)^{\frac{2}{3}} \right\}
\end{equation*}
and by taking exponents, we obtain \eqref{eqn:Bounds-Cosh}.
\end{proof}

\bibliographystyle{plain}


\end{document}